\theoremstyle{plain}
\newtheorem{theorem}{Theorem}[section]
\newtheorem{lemma}[theorem]{Lemma}
\theoremstyle{definition}
\newtheorem{definition}[theorem]{Definition}
\theoremstyle{remark}
\newtheorem{remark}[theorem]{Remark}
\newcommand{\R}[0]{\mathbb{R}}
\newcommand{\C}[0]{\mathbb{C}}
\newcommand{\N}[0]{\mathbb{N}}
\newcommand{\Z}[0]{\mathbb{Z}}
\newcommand{\Prob}[0]{\mathbb{P}}
\newcommand{\E}[0]{\mathbb{E}}
\newcommand{\Ind}[0]{\mathds{1}}
\newcommand{\argmin}[0]{\operatorname{argmin}}
\newcommand{\eqdef}{\vcentcolon=}
\newcommand{\defeq}{=\vcentcolon}
\newcommand\mech[1]{{#1}}
\newcommand\dom[1]{\operatorname{dom}\left({#1}\right)}
\newcommand\codom[1]{\operatorname{codom}\left({#1}\right)}
\newcommand{\iid}{\stackrel{{\text{i.i.d.}}}{\sim}}
\newcommand\tv[2]{\mathrm{TV}\left( {#1}, {#2} \right)}
\newcommand\ham[2]{d_\mathrm{ham}\left( {#1}, {#2} \right)}
\newcommand\p[1]{\left( {#1}\right)}
\newcommand\floor[1]{\left\lfloor {#1}\right\rfloor}
\newcommand\card[1]{\# \left( {#1}\right)}
\newcommand\set[1]{\mathcal{#1}}
\newcommand\vol[0]{\operatorname{Vol}}
\newcommand\proj[0]{\operatorname{Proj}}
\newcommand\tr[0]{\operatorname{tr}}
\newcommand\vspan[0]{\operatorname{Span}}
\definecolor{ao(english)}{rgb}{0.0, 0.5, 0.0}
\icmltitlerunning{On the Private Estimation of Smooth Transport Maps}
\begin{document}

\twocolumn[
\icmltitle{On the Private Estimation of Smooth Transport Maps}



\icmlsetsymbol{equal}{*}

\begin{icmlauthorlist}
\icmlauthor{Clément  Lalanne}{aff1}
\icmlauthor{Franck Iutzeler}{aff1}
\icmlauthor{Jean-Michel Loubes}{aff2,aff1}
\icmlauthor{Julien Chhor}{aff3}
\end{icmlauthorlist}

\icmlaffiliation{aff1}{Institut de Mathématiques de Toulouse, UMR5219, Université de Toulouse, CNRS, UPS, F-31062 Toulouse Cedex 9, France}
\icmlaffiliation{aff2}{INRIA, France}
\icmlaffiliation{aff3}{Toulouse School of Economics, Université Toulouse Capitole, France}

\icmlcorrespondingauthor{Clément Lalanne}{clement.lalanne@math.univ-toulouse.fr}

\icmlkeywords{Machine Learning, ICML}

\vskip 0.3in]



\printAffiliationsAndNotice{}  

\begin{abstract}

Estimating optimal transport maps between two distributions from respective samples is an important element for many machine learning methods. 
To do so, rather than extending discrete transport maps, it has been shown that estimating the Brenier potential of the transport problem and obtaining a transport map through its gradient is near minimax optimal for smooth problems. 
In this paper, we investigate the private estimation of such potentials and transport maps with respect to the distribution samples.
We propose a differentially private transport map estimator achieving an $L^2$ error of at most $n^{-1} \vee n^{-\frac{2 \alpha}{2 \alpha - 2 + d}} \vee (n\epsilon)^{-\frac{2 \alpha}{2 \alpha + d}} $ up to poly-logarithmic terms where $n$ is the sample size, $\epsilon$ is the desired level of privacy, $\alpha$ is the smoothness of the true transport map, and $d$ is the dimension of the feature space. 
We also provide a lower bound for the problem.
\end{abstract}

\section{Introduction}
\label{sec:intro}

Given two probability measures $P, Q$ on $\R^d$, the question of ``how to optimally move the mass'' from $P$ to $Q$, i.e. to find a map $T_0: \R^d \rightarrow \R^d$ that solves the Monge \cite{monge1781memoire} problem
\begin{equation}
\label{eq:monge_problem}
    \begin{aligned}
T_0 \in \argmin &\int_{\R^d} \| T(x) - x \|^2 dP(x)\\
\textrm{s.t.} \quad & T_{\#} P = Q
\end{aligned}
\end{equation}
where $ T_{\#} P$ denotes the \emph{push-forward} of $P$ by $T$ is of high interest in both theoretical and more numerical branches of mathematics.
The problem is referred to as an \emph{Optimal Transport Problem}, and the optimal mapping $T_0$ is referred to as the \emph{Optimal Transport Map}; we refer the reader to the textbooks \cite{villani2003topics,villani2009optimal,santambrogio2015optimal,peyre2019computational} for a general introduction to optimal transport and its computational aspects. 

Because of its utility in measuring geometrical discrepancies between measures, and because of recent algorithmic developments \cite{cuturi2013sinkhorn,DBLP:conf/nips/AltschulerWR17,DBLP:conf/icml/DvurechenskyGK18}, it has become a standard tool in Computer Science \cite{DBLP:conf/miccai/FeydyCVP17,DBLP:journals/tog/LavenantCCS18,DBLP:journals/tog/SolomonGPCBNDG15,DBLP:journals/tog/SolomonPKS16}, Machine Learning \cite{DBLP:journals/corr/abs-1811-01124,DBLP:conf/aistats/Alvarez-MelisJJ18,arjovski2017WGAN,DBLP:conf/nips/CanasR12,gordaliza2019obtainingFairness,DBLP:journals/ml/FlamaryCCR18,DBLP:conf/aistats/GenevayPC18,DBLP:conf/aistats/GraveJB19,DBLP:conf/aistats/JanatiCG19,DBLP:conf/nips/MontavonMC16,DBLP:journals/siamis/SchmitzHBMCCPS18,DBLP:conf/nips/StaibCSJ17,DBLP:conf/iclr/LeNSHHX24} and Statistics \cite{del2024central,del2024centralb,DBLP:journals/siamsc/CazellesSBCP18,DBLP:journals/ma/BarrioGLL19,DBLP:journals/simods/KlattTM20,Kroshnin2019StatisticalIF,annurev:/content/journals/10.1146/annurev-statistics-030718-104938,DBLP:journals/entropy/RamdasTC17,RIGOLLET20181228,DBLP:conf/nips/SeguyC15,DBLP:conf/dsw/TamelingM18,DBLP:conf/colt/WeedB19,10.3150/17-BEJ1009}.

When dealing with real-world data, the true distributions $P$ and $Q$ are often accessible only through samples \cite{NIPS2017_0070d23b,DBLP:conf/pkdd/CourtyFT14,courty2017OTDomainAdapt,DBLP:conf/eccv/DamodaranKFTC18,DBLP:conf/aistats/ForrowHNRSW19,DBLP:conf/nips/PerrotCFH16,DBLP:conf/iclr/SeguyDFCRB18,hutter2021minimax}. In this article, we suppose that we have access to \[X_1, \dots, X_n \iid P \quad \text{ and } ~~~  Y_1, \dots, Y_n \iid Q\]
such that the $X_i$s and the $Y_i$s are mutually independent.\footnote{For simplicity, we take the same number of samples for both distributions. Our results naturally extend to the case where the two sample sizes are different, at the cost of using more involved notation.} 
In this scenario, the problem \eqref{eq:monge_problem} cannot be solved directly to obtain an optimal transport map; instead, it must be estimated using the available samples. 
To do so, a crude approach would be to replace $P$ and $Q$ with their empirical counterparts. 
This approach has two main drawbacks: first, it does not specify how to move points across the entire support of $P$; second, it is affected by the curse of dimensionality \cite{10.3150/21-BEJ1433}. To resolve these issues, \cite{hutter2021minimax} proposed incorporating smoothness and regularity assumptions into the optimal transport map 
$T_0$ and leveraging functional estimators.

At the same time, deriving estimators from real user data raises new challenges, especially regarding privacy. 
It is well-documented that sharing statistics based on such data without adequate protections can lead to serious privacy leakages \cite{narayanan2006break,backstrom2007wherefore,fredrikson2015model,dinur2003revealing,homer2008resolving,loukides2010disclosure,narayanan2008robust,sweeney2000simple,wagner2018technical,sweeney2002k}.

To mitigate these concerns, differential privacy (DP) \cite{dwork2006calibrating} has become the benchmark to ensure privacy protection. 
DP introduces randomness into computations, ensuring that the released statistics are determined not solely by the dataset but also by the added randomness. 
This limits the influence of any single data point on the outcome, thereby preserving privacy. 
Major organizations, including the US Census Bureau~\citep{abowd2018us}, Google~\citep{erlingsson2014rappor}, Apple~\citep{thakurta2017learning}, and Microsoft~\citep{ding2017collecting}, have embraced this methodology.

This work investigates the problem of estimating smooth optimal transport maps from samples under differentially privacy. In particular, we prove that the optimal estimation rate is expected to degrade under specific regimes.

\subsection{Contributions}

The main contributions of this article can be summarized as follows:
\begin{itemize}
    \item  \textbf{A Differentially Private Estimator for Transport Maps.} We introduce a private mechanism (under so-called pure differential privacy) which is aimed at solving the problem of smooth optimal transport maps estimation (see \Cref{sec:privatemechanism}). This estimator is then adapted to be implementable in practice in \Cref{sec:experiments}.
    \item \textbf{Statistical Upper Bound.}  We analyze this estimator in  \Cref{sec:upperbound} and provide an upper bound on its convergence rate.
    \item \textbf{Minimax Lower Bound.} We provide in \Cref{sec:lowerbound} a minimax lower bound on the private optimal transport map problem in order to characterize the difficulty of the problem.
\end{itemize}

\subsection{Related Work}

\paragraph{Differential Privacy and Statistics/Learning.}

Over the past decade, there has been growing interest in estimating various quantities while ensuring differential privacy. Examples of relevant works include, but are not limited to, the following references \cite{wasserman2010statistical,barber2014privacy,diakonikolas2015differentially,karwa2017finite,bun2019privatehypothesis,bun2021privatehypothesis,kamath2019highdimensional,biswas2020coinpress,kamath2020heavytailed,acharya2021differentially,lalanne:thesis,adenali2021unbounded,cai2021cost,brown2021covariance,cai2021cost,kamath2022improved,lalanne2023private,lalanne2022private,lalanne2024privatedensity,singhal2023polynomial,kamath2023biasvarianceprivacy,kamath2023new}. 

More specifically, this article falls into the scope of private nonparametric statistics \cite{tsybakov2003introduction}, where the quantities of interest live in \emph{infinite} dimensional vector spaces, necessitating the use of approximation techniques alongside estimation. 
The problem of non-parametric density estimation under differential privacy has been studied in various works \cite{wasserman2010statistical,barber2014privacy,lalanne2023about,lalanne2024privatedensity}, both in the setting of this article and in the setup of local privacy (without a trusted server) \cite{10.1145/773153.773174,4690986,duchi2013local,duchi2018minimax,butucea2020local,kroll2021density,schluttenhofer2022adaptive,gyorfi2023multivariate}. Additional works include studies on nonparametric regression \cite{berrett2021strongly,gyorfi2022rate}, nonparametric tests \cite{lam2022minimax}, interactions between robustness and privacy~\cite{chhor2023robust} and recent advances in locally private Bayesian modeling \cite{beraha2023mcmc}.

To the best of our knowledge, our work is the first to address private estimation of smooth transport maps with statistical guarantees. A noteworthy exception is the article \cite{xian2024DPfairRegression} which uses elements of optimal transport in a private way in order to obtain fairness with statistical guarantees. Their approach is discussed in the next paragraph.

\paragraph{Differential Privacy and Optimal Transport}
A line of work \cite{rakotomamonjy2021DPslicedWasserstein}, building on the ideas from \cite{harder2021dp} provides privacy guarantees for the values of the sliced Wasserstein distance and MMD, respectively. 

On another note, other research has explored task-specific private methods utilizing optimal transport. 
For instance, \cite{segag2023gradientFlow} employs the sliced Wasserstein distance for data generation using an approach based on gradient flows. 
Additionally, \cite{tien2019DPOTdomainAdapt} addresses differentially private domain adaptation via optimal transport by perturbing the optimal coupling between noisy datasets. 

More recently, \cite{xian2024DPfairRegression} investigated fair and private regression, using tools from optimal transport theory. More precisely, they build private histogram estimators of the distributions $P$ and $Q$ and then compute the maps to their Wasserstein barycenter (which has applications in fairness). Their work is not directly related to our study, since the authors focus on fairness guarantees rather than on the problem of learning transport maps.

Beyond these efforts, optimal transport has also been explored within novel privacy paradigms outside the scope of our work \cite{pierquin2024Pufferfish,Kawamoto2019localObfuscation,yang2024wassersteinDP}.

\subsection{Notation}

The symbols $\N$, $\Z$, $\R$ and $\C$ are respectively used to refer to the sets of natural numbers (including $0$), relative numbers, real numbers, and complex numbers.
For any $k \in \N$, $\mathcal{C}^k(\mathcal{S})$ denotes the set of functions from a space $\mathcal{S}$ to $\C$ that are $k$ times continuously differentiable, and $\mathcal{C}^{\infty}(\mathcal{S})$ is defined as $\cap_{k \in \N} \mathcal{C}^k(\mathcal{S})$. 
Whenever applicable, $\nabla$ and $\nabla^2$ are used to refer to gradient and Hessian operators, respectively.
For a subset $S$ of a normed vector space, $|S|$ refers to $\sup_{x \in S} \|x\|$ for the inherited norm. 
For any set $S$, $\card{S}$ denotes its cardinality. 
For a family $F$ of vectors, $\vspan(F)$ refers to the smallest vector space containing $F$.
For a closed convex set of an Hilbert space, $\proj_C$ refers to the convex projection onto $C$.
The Hölder norm of order $\alpha$ on $S$ (see Appendix B in \cite{hutter2021minimax}) is denoted by $\| T \|_{C^{\alpha}(S)}$. 
For a measure $\mu$, $\|  \cdot \|_{L^2(\mu)}$ is the usual $L^2$ norm with reference measure $\mu$. 
With a small overlap of notation, for a measurable $S \subset \R^d$, $\|\cdot\|_{L^2(S)}$ refers to the $L^2$ norm with reference measure $\Ind_S \cdot \lambda$ where $\lambda$ is Lebesgue's measure.
The asymptotic regimes are considered when $n \rightarrow + \infty$ and $n \epsilon \rightarrow + \infty$.

\section{Smooth Transport Maps Estimation}
\label{sec:problem}

In this section, we cover some basic theory on optimal transport and the minimax estimation of smooth transport maps. 
Following \cite{hutter2021minimax}, we set the feature space\footnote{As in \cite{hutter2021minimax}, $\Omega$ can be replaced by any bounded and connected Lipschitz domain. In this case, $\tilde{\Omega}$ can be replaced by an hypercube containing $\Omega$ in its interior.} $\Omega = [0, 1]^d$ and define $\tilde{\Omega} = [-1, 2]^d$. The notation $M$ (resp. $R$) will refer to a positive constant strictly greater than $2$ (resp. $\| \text{ Identity } \|_{C^{\alpha}(\tilde{\Omega})}$) throughout the paper. 
The constants will hide terms that depend on $\Omega$, $\tilde{\Omega}$, $M$, $\alpha$ and $d$, which was also the case in \cite{hutter2021minimax}.

\subsection{Semi-Dual Problem and Smoothness}

In the case where $P$ and $Q$ are supported in $\Omega$ and are absolutely continuous with respect to Lebesgue's measure, Brenier's theorem \cite{brenier1991polar} ensures that the solution to Problem \eqref{eq:monge_problem} is a transport map $ T_0 = \nabla f_0 $ where $f_0$ is a convex function. Furthermore, $f_0$ can be equivalently obtained by solving the semi-dual problem
\begin{equation}
\label{eq:semi_dual_problem}
    \begin{aligned}
f_0 \in \argmin & \underbrace{\int f(x) dP(x) + \int f^*(y) dQ(y)}_{\defeq S(f)}\\
\textrm{s.t.} \quad & f \in L^1(P)
\end{aligned}
\end{equation}
where for all $y$
\begin{equation}
    f^*(y) = \sup_{x \in \tilde{\Omega}} \langle x, y \rangle - f(x)
\end{equation}
is the Fenchel-Legendre transform of $f$;\footnote{We restrict the $\sup$ to $\tilde{\Omega}$ because we are considering extended functions as in Appendix A in~\cite{hutter2021minimax}.} see e.g. \cite{ruschendorf1990characterization}. In this problem, $f$ is called the \emph{(Brenier) potential}.

In order to estimate transport maps through Brenier potentials and obtain convergence rates, we need to enforce some regularity conditions, which we detail below. 
First, we require the first probability distribution to be absolutely continuous with lower and upper bounded density.

\begin{definition}[Admissible source distributions]
    We denote by $\mathcal{M}$ the set of probability measures $P$ on $\R^d$ that are supported on $\Omega$, that are absolutely continuous w.r.t. Lebesgue's measure and whose density $\rho_P$ verifies $\frac{1}{M} \leq \rho_P(x) \leq M$ for almost all $x$ in $\Omega$.
\end{definition}

Furthermore, we control the bias by imposing smoothness assumptions on the optimal transport map. 
The following definition introduces the class of admissible smooth transport maps.

\begin{definition}[Admissible smooth transport maps]\label{def:admtransport}
    We denote by $\mathcal{T}$ the set of differentiable mappings $T: \tilde{\Omega} \rightarrow \R^d$ such that $T = \nabla f$ for some differentiable convex function $f: \tilde{\Omega} \rightarrow \R^d$ and 
    \begin{itemize}
        \item 
        $\forall x \in \tilde{\Omega}, \quad \|T(x)\| \leq M$,
        \item  $\forall x \in \tilde{\Omega}, \frac{1}{M} \preceq \nabla^2 f (x) \preceq M$,
        \item ${P_{\# T}}(\Omega) = 1$.
    \end{itemize}
    Furthermore, for $\alpha > 1$ and $R > 1$,  we define 
    \begin{equation}
        \set{T}_{\alpha}(R) = \left\{ T \in \set{T}: 
        \begin{cases} 
        T \text{ is } \floor{\alpha}  \text{ times differentiable,}\\
        \text{and } \| T \|_{C^{\alpha}(\tilde{\Omega})} \leq R
        \end{cases}    \right\} \;.
    \end{equation}
\end{definition}

In words, admissible transport maps have to be bounded (first item), come from a smooth and strongly convex potential (second item), and have a stable support (third item). 
In addition, we will assume that the optimal transport map belongs to the set $ \set{T}_{\alpha}(R)$ of admissible transport maps whose Hölder norm of order $\alpha$ is bounded by $R$. 

\subsection{Empirical Semi-Dual}

\citet{hutter2021minimax} observed that the objective in \eqref{eq:semi_dual_problem} consists of the sum of an expectation over $P$ and an expectation over $Q$, which can thus be approximated using Monte-Carlo averages.
This led them to propose an estimator of the form 
\begin{equation}
    \hat{T}_0 \eqdef \nabla \hat{f}_0 \;,
\end{equation}
where
\begin{equation}
    \label{empiricalsemidualproblem}
    \begin{aligned}
\hat{f}_0 \in \argmin \quad & \underbrace{ \frac{1}{n} \sum_{i = 1}^n f(X_i) + \frac{1}{n} \sum_{i = 1}^n f^*(Y_i)}_{\defeq \hat{S}(f | X_{1:n}, Y_{1:n})}\\
\textrm{s.t.} \quad & f \in \hat{V}_0
\end{aligned}
\end{equation}
and where $\hat{V}_0 \subset L^2(\R^d)$ is a functional space to be specified later.
Note that $S$ and $\hat{S}$ are linked by the relation 
\begin{equation}
    \forall f, \quad S(f) = \E_{X_{1:n}, Y_{1:n}} \p{\hat{S}(f | X_{1:n}, Y_{1:n})} \;.
\end{equation}

Controlling the deviations of $\hat{S}$ from its expectation, as well as using $S(f) - S(f_0)$ effectively as a  for the transport maps suboptimality, typically requires regularity of the admissible potentials. This is formalized in the following definition mirroring \cref{def:admtransport}.

\begin{definition}[Admissible potentials]
    We denote by $\mathcal{X}(M)$ the set of twice continuously differentiable functions $f: \tilde{\Omega} \rightarrow \R$ such that $\forall x \in \tilde{\Omega} $
    \begin{itemize}
        \item $\quad |f(x)| \leq 2M^2$,
        \item $ \quad \| \nabla f(x) \| \leq M$,
        \item $\frac{1}{M} \preceq \nabla^2 f (x) \preceq M$.
    \end{itemize}
\end{definition}

Next, we specify how $\hat{V}_0$ can be chosen to obtain an implementable estimator.

\subsection{Wavelet Decompositions and Subspace Approximations}\label{sec:empricalestimator}

In line with standard approaches in nonparametric estimation, and in order to exploit the smoothness of the optimal map, \citet{hutter2021minimax} suggested using successive approximations over nested subspaces $V_1 \subset V_2 \subset \dots \subset V_J \subset \dots \subset L^2(\R^d)$ of $L^2(\R^d)$, where $J \geq 1$ denotes an integer controlling the resolution of the approximation space. %

Given a pair $\psi_M$ and $\psi_F$ of smooth enough mother and father wavelets with compact supports, one can define a wavelet Hilbert basis of $L^2(\R)$ of the form
\begin{equation}
    (\Psi_k^{j, g})_{j \in \N, k \in \Z^d, g \in G^j}
\end{equation}
 where $\forall j$, $G^j$ is finite. The exact construction is presented in Appendix B of \cite{hutter2021minimax}, with complements given in the present  \Cref{sec:details_wavelet_decomposition}. We do not include it in the main body of this article as we believe that it is not essential to understanding the core message.

Importantly, since we are interested in guarantees in $L^2(\tilde{\Omega})$ rather than $L^2(\R^d)$, and because the supports of the father and mother wavelets are compact, the basis can be restricted to the indices such that $k \in K_j$ for some \textit{finite} $K_j \subset \Z^d$.

Thus, defining 
\begin{equation}
    V_J \eqdef \vspan \p{(\Psi_k^{j, g})_{{j \leq J}, k \in K_j, g \in G^j} }
\end{equation}
yields a sequence of \emph{finite-dimensional} nested approximation spaces.

The estimator proposed by \citet{hutter2021minimax} is the solution of \eqref{empiricalsemidualproblem} where $\hat{V}_0$ is replaced with $V_J \cap \mathcal{X}(2M)$. 
We denote the solution of the resulting problem and its associated transport map by $\hat{f}_J$ and $\hat{T}_J \eqdef \nabla \hat{f}_J$, respectively.  

Finally, measuring the \textit{utility} (or error) of a candidate transport map $T$ by its squared $L^2(P)$ distance
\begin{equation}
    d(T, T_0)^2 \eqdef \int \| T - T_0 \|^2 dP \;,
\end{equation}
we can guarantee that such a transport map estimator has a (near) minimax optimal rate. 

\begin{theorem}[{Th. 2 of \cite{hutter2021minimax}}]
    For any $\alpha>1$, the minimax rate of smooth optimal transport map estimation is
    \begin{align}
         \inf_{\hat{T}} \sup_{P\in\mathcal{M},T_0\in\mathcal{T}_\alpha} \E_{X_{1:n}, Y_{1:n}} &\p{d(\hat{T, T_0)^2}} \gtrsim \frac{1}{n} \vee n^{- \frac{2 \alpha}{2 \alpha - 2 + d}} \, . 
    \end{align} 
    Moreover, if $P\in\mathcal{M}$ and $T_0\in\mathcal{T}_{\alpha}$, the estimator $\hat{T}_J \eqdef \nabla \hat{f}_J$ defined above achieves this rate up to polylogarithmic factors. Here, the constants also hide a dependence on $R$.
\end{theorem}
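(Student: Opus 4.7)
The plan is to establish the upper bound (achievability by $\hat T_J$) by an empirical-process analysis of the semi-dual objective, and the lower bound by the usual hypothesis-testing reductions.

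\textbf{Upper bound.} The cornerstone is a quadratic-growth identity for $S$: under the admissibility assumptions, for every $f \in \mathcal{X}(2M)$,
\begin{equation*}
    \tfrac{1}{2M}\|\nabla f - \nabla f_0\|_{L^2(P)}^2 \leq S(f) - S(f_0) \leq \tfrac{M}{2}\|\nabla f - \nabla f_0\|_{L^2(P)}^2,
\end{equation*}
which follows from Fenchel--Young duality combined with the uniform strong convexity and smoothness of $f_0$ (Hessian pinched between $1/M$ and $M$). Fix a near-best approximant $\tilde f_J \in V_J \cap \mathcal{X}(2M)$ of $f_0$; by optimality of $\hat f_J$, the basic inequality $S(\hat f_J) - S(f_0) \leq S(\tilde f_J) - S(f_0) + (S-\hat S)(\hat f_J - \tilde f_J)$ reduces the task to bounding the approximation and stochastic terms. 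Since $T_0 \in C^\alpha$ implies $f_0 \in C^{\alpha+1}$, standard wavelet estimates give $\|\nabla\tilde f_J - \nabla f_0\|_{L^2}^2 \lesssim 2^{-2J\alpha}$.

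The stochastic term is handled by a localized empirical-process argument on the $L^2(P)$-gradient ball of radius $r$ intersected with $V_J \cap \mathcal{X}(2M)$. Because the wavelet basis nearly diagonalizes the $H^1$-inner product, this localized set is an ellipsoid with semi-axes $\asymp r\cdot 2^{-j}$ at resolution $j$; a chaining / local-Rademacher calculation then yields $\E \sup_g |(\hat S - S)(g)| \lesssim r\sqrt{2^{J(d-2)}/n}$ up to logarithmic corrections. The key point --- responsible for the exponent being $2\alpha/(2\alpha-2+d)$ rather than the naive $2\alpha/(2\alpha+d)$ --- is that the $2^{-2j}$ weighting in the $H^1$-ellipsoid reduces the effective dimension from $\dim V_J = 2^{Jd}$ to $2^{J(d-2)}$. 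Solving $r^2 \lesssim r\sqrt{2^{J(d-2)}/n} + 2^{-2J\alpha}$ and choosing $2^J \asymp n^{1/(2\alpha-2+d)}$ balances variance and bias to give the rate $n^{-2\alpha/(2\alpha-2+d)}$, plus the parametric $1/n$ residual from lower-order concentration and polylog factors.

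\textbf{Lower bound.} The $1/n$ floor follows from Le Cam's two-point method with $T_0 = \mathrm{Id}$ versus $T_1 = \nabla(\tfrac12\|x\|^2 + \delta g)$ for a fixed smooth $g$ compactly supported in the interior of $\Omega$ and $\delta \asymp n^{-1/2}$: both hypotheses lie in $\mathcal{T}_\alpha(R)$ for small $\delta$, the $L^2(P)$-transport separation is $\asymp \delta^2$, and the joint KL on $(X_{1:n}, Y_{1:n})$ is $O(n\delta^2) = O(1)$. The nonparametric rate $n^{-2\alpha/(2\alpha-2+d)}$ comes from Fano applied to the wavelet-bump family
\begin{equation*}
    f_\omega(x) = \tfrac12\|x\|^2 + h^{\alpha+1}\sum_{i=1}^N \omega_i\phi_i(x),\quad \omega \in \{0,1\}^N,
\end{equation*}
with $N \asymp h^{-d}$ disjoint bumps of scale $h$ and unit rescaled $C^{\alpha+1}$ norm. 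Gilbert--Varshamov produces $\gtrsim 2^{N/8}$ pairwise-$(N/8)$-Hamming-separated codewords with transport $L^2$-separation $\asymp h^{2\alpha}$. Since the Hessian perturbation is of order $h^{\alpha-1}$, the pushforward densities $Q_\omega = (\nabla f_\omega)_{\#}P$ differ by $O(h^{\alpha-1})$, giving $\mathrm{KL}(Q_\omega^{\otimes n}\|Q_{\omega'}^{\otimes n}) \lesssim nh^{2\alpha-2}$. Fano's condition $nh^{2\alpha-2} \lesssim N = h^{-d}$ forces $h \asymp n^{-1/(2\alpha-2+d)}$, yielding the claimed separation.

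The hardest step is the stochastic bound in the upper argument: obtaining the ellipsoid complexity $2^{J(d-2)}/n$ requires chaining with respect to a metric weighted by $2^j$ and a careful linearization of the non-quadratic Legendre-transform term $f^\ast(Y_i)$ around $f_0^\ast$. A secondary technical point, in the lower bound, is verifying that every $f_\omega$ belongs to $\mathcal{X}(2M)$ and that $Q_\omega$ has density bounded away from zero on $\Omega$; this constrains the bumps to be supported in the interior of $\Omega$ and requires uniform quantitative Hessian control for each $f_\omega$.
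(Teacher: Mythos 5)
This theorem is quoted from H\"utter and Rigollet (Th.~2 of \cite{hutter2021minimax}); the paper gives no proof of its own, and your sketch follows essentially the same route as that cited source — the curvature (quadratic growth) bounds for the semi-dual, wavelet approximation with localized empirical-process control yielding the effective dimension $2^{J(d-2)}$, and a two-point plus bump-packing (Fano/Assouad) lower bound of the same type reused in this paper's Appendix for the private lower bound. Your outline is consistent with that argument, so no substantive divergence or gap to flag.
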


\subsection{Towards the Analysis of a Private Estimator}

In this paper, we extend this approach to the context of differential privacy. We derive a private estimator $\hat{f}_{\text{\normalshape priv}}$ for the optimal Brenier potential and deduce the corresponding private optimal transport map $\hat{T}_{\text{\normalshape priv}} = \nabla \hat{f}_{\text{\normalshape priv}}$.
To understand how privacy will affect the utility of the estimation, we propose the following risk decomposition.

\newcommand{\Priv}{\mathsf{P}\text{\normalshape riv}}

\begin{lemma}
\label{lemma:risk_decomposition}
Suppose that there exists a random variable $\Priv $ independent of all the other sources of randomness such that $\hat{f}_{\text{\normalshape priv}}$ is $( X_{1:n}, Y_{1:n}, \Priv )$-measurable
and there exists $U \geq 0$ that is $\Priv $-measurable such that 
\begin{equation}
    \underbrace{\hat{S}(\hat{f}_{\text{\normalshape priv}} | X_{1:n}, Y_{1:n}) - \hat{S}(\hat{f}_{J} | X_{1:n}, Y_{1:n})}_{\text{Suboptimality on the semi-dual}} \leq U
\end{equation}
almost surely. If $\hat{f}_{\text{\normalshape priv}} \in V_J \cap \mathcal{X}(2M)$ almost surely, then 
    \begin{equation}
    \begin{aligned}
        \E_{X_{1:n}, Y_{1:n}} &\p{d(\hat{T}_{\text{\normalshape priv}}, T_0)^2} \\
        &\lesssim 
        U + \frac{J 2^{J (d-2)}\ln(1 + C n)}{n}+ \frac{1}{n} \\
        &\qquad\qquad\qquad + \inf_{T \in V_J \cap \mathcal{X}(2M)} d(T, T_0)^2
    \end{aligned}
    \end{equation}
    $\Priv $-almost surely for some positive constant $C$. 
    Here, $\E_{X_{1:n}, Y_{1:n}}$ denotes the expectation w.r.t. the data, i.e. conditional on $\Priv $.
\end{lemma}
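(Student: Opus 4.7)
The plan is to adapt the standard ERM analysis (as in \cite{hutter2021minimax}) to the perturbed optimum $\hat{f}_{\text{\normalshape priv}}$, where the extra term $U$ accounts for the suboptimality induced by the privacy mechanism. The starting point is the fact, proved in \cite{hutter2021minimax} using strong convexity of the semi-dual objective on $\mathcal{X}(2M)$, that the $L^2(P)$ error of the associated transport map is controlled by the excess risk on the semi-dual:
\begin{equation}
    d(\hat{T}_{\text{\normalshape priv}}, T_0)^2 \lesssim S(\hat{f}_{\text{\normalshape priv}}) - S(f_0).
\end{equation}
So it is enough to bound the right-hand side in expectation over the data.

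Next, I would carry out the usual ``add and subtract'' decomposition. For any fixed $\bar f \in V_J \cap \mathcal{X}(2M)$, write
\begin{align*}
    S(\hat{f}_{\text{\normalshape priv}}) - S(f_0)
    &= \bigl[ S(\hat{f}_{\text{\normalshape priv}}) - \hat{S}(\hat{f}_{\text{\normalshape priv}}) \bigr] \\
    &\quad + \bigl[ \hat{S}(\hat{f}_{\text{\normalshape priv}}) - \hat{S}(\hat{f}_J) \bigr] \\
    &\quad + \bigl[ \hat{S}(\hat{f}_J) - \hat{S}(\bar f) \bigr] \\
    &\quad + \bigl[ \hat{S}(\bar f) - S(\bar f) \bigr] \\
    &\quad + \bigl[ S(\bar f) - S(f_0) \bigr].
\end{align*}
The second bracket is $\leq U$ by assumption; the third is $\leq 0$ because $\hat{f}_J$ is the minimizer of $\hat{S}$ over $V_J \cap \mathcal{X}(2M)$ and $\bar f$ lies in that set; the fourth has mean zero under $\E_{X_{1:n},Y_{1:n}}$ since $\bar f$ is fixed, and its fluctuations produce the $\tfrac{1}{n}$ term after a standard variance argument (arguments of $\hat S$ are bounded on $\mathcal{X}(2M)$). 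The last bracket can be bounded by $\lesssim d(\nabla \bar f, T_0)^2$ via the quadratic upper bound for the semi-dual excess, and optimising $\bar f$ yields the approximation term $\inf_{T \in V_J \cap \mathcal{X}(2M)} d(T, T_0)^2$.

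The main obstacle is the first bracket, which is the uniform empirical-process term. Since $\hat{f}_{\text{\normalshape priv}} \in V_J \cap \mathcal{X}(2M)$ almost surely, it is enough to bound
\begin{equation}
    \E_{X_{1:n},Y_{1:n}} \sup_{f \in V_J \cap \mathcal{X}(2M)} \bigl| \hat{S}(f \mid X_{1:n}, Y_{1:n}) - S(f) \bigr|.
\end{equation}
To control this, I would follow the covering/chaining strategy used in \cite{hutter2021minimax}: build an $\eta$-cover of $V_J \cap \mathcal{X}(2M)$ in a norm compatible with $S$ (the dimension of $V_J$ is of order $2^{Jd}$, but the effective complexity entering $S$ scales with gradients and yields a $2^{J(d-2)}$ factor), apply a Bernstein/Hoeffding bound at each point of the cover, and optimise $\eta \asymp 1/n$, which is the origin of the logarithmic factor $\ln(1+Cn)$. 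This yields $\tfrac{J\,2^{J(d-2)} \ln(1+Cn)}{n}$.

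Putting the five brackets together and taking expectation conditionally on $\mathsf{Priv}$ (so that $U$ remains deterministic), and then optimising over $\bar f \in V_J \cap \mathcal{X}(2M)$, produces the claimed bound. The hardest and most delicate step is the uniform deviation control: it is there that the dimension of $V_J$ enters, and obtaining the sharper $2^{J(d-2)}$ factor (rather than the naive $2^{Jd}$) requires exploiting that the semi-dual depends on $f$ mainly through $\nabla f$ together with the Hessian bound from $\mathcal{X}(2M)$.
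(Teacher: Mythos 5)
Your overall architecture (reduce to the excess semi-dual risk via strong convexity, insert the privacy suboptimality $U$ additively through the basic inequality, bound the bias by optimizing over $\bar f \in V_J\cap\mathcal{X}(2M)$) matches the spirit of the paper's proof, and the brackets you call second, third and fifth are handled correctly. The genuine gap is in the first bracket. You propose to control $\E\sup_{f\in V_J\cap\mathcal{X}(2M)}|\hat S(f\mid X_{1:n},Y_{1:n})-S(f)|$ by a global covering/chaining argument and assert that this yields $J\,2^{J(d-2)}\ln(1+Cn)/n$. It cannot: the class consists of uniformly bounded functions whose increments $f(X)+f^*(Y)$ have variance of constant order at a fixed $f$ far from $f_0$, so the global supremum of the centered empirical process is of order $\sqrt{2^{Jd}/n}$ (up to logarithms) and is in fact bounded below by $c/\sqrt n$; no Bernstein-plus-entropy computation on the unlocalized class produces a $1/n$-type ``fast'' rate, and the heuristic that ``the semi-dual depends on $f$ mainly through $\nabla f$'' does not by itself reduce the entropy exponent from $2^{Jd}$ to $2^{J(d-2)}$. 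Plugging your bound into the decomposition would give a slow rate strictly worse than the statement of the lemma (even the claimed parametric $1/n$ term would be lost).

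What the paper does instead, following Section 5.4 and Proposition 11 of Hütter--Rigollet, is a \emph{localized} analysis: with $\bar f\in\argmin_{V_J\cap\mathcal{X}(2M)}S_0$, it forms the interpolant $\hat f_s=s\hat f_{\mathrm{priv}}+(1-s)\bar f$ with $s=\sigma/(\sigma+\|\nabla\hat f_{\mathrm{priv}}-\nabla\bar f\|_{L^2(P)})$, so that $\hat f_s$ automatically lies in the localized class $\mathcal{F}_J(\tau^2)=\{f\in V_J\cap\mathcal{X}(2M):S_0(f)\le\tau^2\}$ with $\tau^2=4M(\sigma^2+\|\nabla\bar f-\nabla f_0\|^2)$; convexity of $\hat S$ together with your second and third brackets gives $S_0(\hat f_s)\le S_0(\bar f)+U+2\sup_{f\in\mathcal{F}_J(\tau^2)}|S_0(f)-\hat S_0(f)|$, and on $\mathcal{F}_J(\tau^2)$ the curvature of $S$ makes the variance of the increments of order $\tau^2$, so that a Bernstein-type bound plus a fixed-point argument in $\tau$ delivers the fast term $J\,2^{J(d-2)}\ln(1+Cn)/n+1/n$; one then passes from $\hat f_s$ back to $\hat f_{\mathrm{priv}}$. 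The privacy term $U$ indeed just rides along additively, as you intended, but the localization step (and the attendant use of convexity of $\hat S$ and of the curvature bounds defining $\mathcal{X}(2M)$) is essential and is missing from your plan; without it the claimed rate is not reachable.
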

\begin{proof}
    See \Cref{proof_of_risk_decomposition}.
\end{proof}

In the next section, we propose a private estimator $\hat{f}_{\text{\normalshape priv}}$ and explain how to control $\hat{S}(\hat{f}_{\text{\normalshape priv}} | X_{1:n}, Y_{1:n}) - \hat{S}(\hat{f}_{J} | X_{1:n}, Y_{1:n})$. In particular, this estimator will satisfy the hypotheses of \Cref{lemma:risk_decomposition}.

\section{Private Estimator}
\label{sec:privatemechanism}

This section presents some background on differential privacy and introduces our private estimator.

\subsection{Differential Privacy}

Given a (randomized) mechanism $\mech{M}$ (i.e. a conditional kernel of probabilities), $\dom{\mech{M}}$ denotes its domain (i.e. the set of admissible inputs) and $\codom{\mech{M}}$ refers to its codomain (i.e. the set of admissible outputs). 
The set $\dom{\mech{M}}$ represents the space of all possible data sets and is equipped with a binary symmetric relation $\cdot \sim \cdot$ called \emph{neighboring} relations. 
Informally, two datasets are neighboring if they contain the same data except for at most one individual. 
Formally, we say that $(X_{1:n}, Y_{1:n}) \sim (X_{1:n}', Y_{1:n}')$ if $\ham{(X_{1:n}, Y_{1:n})}{(X_{1:n}', Y_{1:n}')} \leq 1$, where $\ham{\cdot}{\cdot}$ to denote the Hamming distance, 
\begin{definition}[Differential Privacy \cite{dwork2006calibrating}]
     Given $\epsilon \geq 0$, a randomized mechanism $\mech{M}: \dom{\mech{M}} \rightarrow \codom{\mech{M}}$ is $\epsilon$-differentially private (or $\epsilon$-DP) if for all $D \sim D' \in \dom{M}$ and all measurable $S \subset \codom{\mech{M}}$ we have
\begin{equation}
\begin{aligned}
    \Prob_{\mech{M}}\p{\mech{M}(D) \in S} \leq e^\epsilon \Prob_{\mech{M}}\p{\mech{M}(D') \in S} \;.
\end{aligned}
\end{equation}
\end{definition}

Here, $\epsilon \geq 0$ is a parameter that controls the privacy level: Lower values of $\epsilon$ correspond to more private mechanisms $M$. 

We will rely on the standard \emph{report noisy max} mechanism \cite{dwork2014algorithmic,mcsherry2007mechanism,mckenna2020permuteandflip,ding2021permuteandflip}. 
We give its version instantiated with Laplace noise below.
\begin{lemma}[Report Noisy Argmin with Laplace Noise]
\label{lemma:report_noisy_max}
    Let $f_1, \dots, f_N$ be queries with sensitivities that are uniformly bounded by $\Delta$, that is, for any of neighboring datasets $D \sim D'$ and for any $i \in \{1,\dots,n\}$,
    \begin{equation}
    \label{eq:kjhfkjsbhjdhgfjhqbshdbfbfbfbf}
         |f_i(D) - f_i(D')| \leq \Delta \;.
    \end{equation}
    Then, if $L_1, \dots, L_N$ are independent and identically distributed random variables with standard Laplace distribution, the mechanism $\hat{i}$ defined as
    \begin{equation}
        \hat{i}(D) \in \argmin \left\{ f_i(D) + \frac{2 \Delta}{\epsilon} L_i \right\}
    \end{equation}
    is $\epsilon$-DP,  with the convention that we return a random index in the $\argmin$ when it is not unique.
\end{lemma}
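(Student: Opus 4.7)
The plan is to run a standard Report-Noisy-Argmin argument. Because the codomain is discrete, it is enough to show pointwise privacy: for every fixed target $i^\star \in \{1,\dots,N\}$ and every pair of neighboring datasets $D \sim D'$, the goal will be to establish
\[
\Prob\!\left[\hat{\imath}(D) = i^\star\right] \leq e^\epsilon \, \Prob\!\left[\hat{\imath}(D') = i^\star\right],
\]
after which the bound for an arbitrary measurable $S \subset \{1,\dots,N\}$ follows by summing over singletons. Ties occur on a measure-zero event under continuous Laplace noise, so the random tie-breaking convention plays no role in any of the probabilities that follow.

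The main step will be a change of variable. Write the rescaled noise as $\xi_i := (2\Delta/\epsilon) L_i$, which is Laplace with density $p(v) = (\epsilon/(4\Delta))\exp(-\epsilon|v|/(2\Delta))$. First, I would expand $\Prob[\hat{\imath}(D) = i^\star]$ as an $N$-fold integral against the product density of the $\xi_i$'s, carrying the indicator $\prod_{j \neq i^\star} \mathbf{1}\{ f_{i^\star}(D) + v_{i^\star} < f_j(D) + v_j \}$. Then, inside that integral, I would substitute $v_{i^\star} = u_{i^\star} + 2\Delta$ while leaving the $N-1$ other coordinates untouched; the event inside the indicator becomes $f_{i^\star}(D) + 2\Delta + u_{i^\star} < f_j(D) + u_j$ for every $j \neq i^\star$.

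Two estimates will then close the argument. First, applying the sensitivity bound $|f_i(D) - f_i(D')| \leq \Delta$ separately to $i = i^\star$ and to $i = j$, the shifted inequality implies $f_{i^\star}(D') + u_{i^\star} < f_j(D') + u_j$, which is exactly the event defining $\{\hat{\imath}(D') = i^\star\}$; the $2\Delta$ buffer is precisely what absorbs the two $\Delta$-sized perturbations from the neighboring relation. Second, the density factor becomes $p(u_{i^\star} + 2\Delta)$ instead of $p(u_{i^\star})$; by the reverse triangle inequality $|u_{i^\star}+2\Delta| - |u_{i^\star}| \geq -2\Delta$, their ratio is at most $e^\epsilon$ uniformly in $u_{i^\star}$. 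Combining yields the desired singleton inequality, and hence the lemma.

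No substantive obstacle is anticipated; the only point that needs real care is the sign of the shift. Because the mechanism is an $\argmin$ (rather than the more customary $\argmax$ in textbook presentations), the $2\Delta$ offset on the $i^\star$-th coordinate must be added and not subtracted, with matching signs propagated through both the event-containment step and the density-ratio step. Once these are aligned, the remaining computations are elementary.
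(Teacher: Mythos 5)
Your proposal is correct, and the sign bookkeeping you flag (adding, not subtracting, the $2\Delta$ shift because the mechanism is an argmin) works out exactly as you describe: the shifted event $\{f_{i^\star}(D)+u_{i^\star}+2\Delta < f_j(D)+v_j \ \forall j\neq i^\star\}$ is contained in the winning event for $i^\star$ under $D'$, since each of $f_{i^\star}$ and $f_j$ can move by at most $\Delta$, and the Laplace density with scale $2\Delta/\epsilon$ satisfies $p(u+2\Delta)\leq e^{\epsilon}p(u)$ uniformly. However, your route is genuinely different from the paper's. The paper conditions on the other $N-1$ noise variables, writes the conditional probability $\Prob\p{\hat{i}(D)=k \mid (L_{k'})_{k'\neq k}}$ in closed form as the Laplace CDF evaluated at the random threshold $t(D)=\min_{k'\neq k}\p{f_{k'}(D)-f_k(D)+L_{k'}}$, observes $|t(D)-t(D')|\leq 2\Delta$, and then bounds the ratio of the two CDF values by $e^{2\Delta}$ (in normalized scale) through a four-case analysis on the signs of $t(D)$ and $t(D')$, before integrating out the conditioning. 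You instead integrate over all $N$ noise coordinates at once and perform a change of variables on the winning coordinate only, combining an event-containment step with a pointwise density-ratio bound; no explicit CDF formula and no case analysis are needed. Your argument is shorter and more portable: it applies verbatim to any noise density satisfying a uniform $2\Delta$-shift ratio bound, whereas the paper's computation exploits the exact form of the Laplace CDF. The paper's version, on the other hand, is fully explicit and makes the conditioning and tie-handling completely concrete. Both yield the same privacy constant, so there is no gap in your plan.
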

In particular, we will use this mechanism as a building block to construct a private estimator of $\hat{f}_J$.

\subsection{Noisy semi-dual estimator}

In this section, we construct our private estimator, building on the empirical semi-dual one presented above.

The first step is to provide an upper bound on the sensitivity of $\hat{S}(f | X_{1:n}, Y_{1:n})$, which is established in the lemma below.
\begin{lemma}[Sensitivity of the semi-dual objective]
\label{lemma:sensitivitysemidual}
    For any $f\in\mathcal{X}(2M)$, it holds that
    \begin{equation}
    \begin{aligned}
        \sup_{(X_{1:n}, Y_{1:n}) \sim (X_{1:n}', Y_{1:n}')} &\left| \hat{S}(f | X_{1:n}, Y_{1:n}) - \hat{S}(f | X_{1:n}', Y_{1:n}') \right|
         \\
         &\qquad\qquad\leq 
        \frac{2 \| f \|_{\infty} \vee 2 | \tilde{\Omega} |^2}{n} \;.
    \end{aligned}
    \end{equation}
\end{lemma}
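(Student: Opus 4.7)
The plan is to exploit the additive structure $\hat{S}(f|X_{1:n},Y_{1:n}) = \frac{1}{n}\sum_{i=1}^n f(X_i) + \frac{1}{n}\sum_{j=1}^n f^*(Y_j)$. Two neighboring datasets, being at Hamming distance at most $1$, differ in at most one coordinate: either some $X_i$ is replaced by $X_i'$, or some $Y_j$ by $Y_j'$. The difference $\hat{S}(f|X_{1:n},Y_{1:n}) - \hat{S}(f|X_{1:n}',Y_{1:n}')$ therefore collapses to a single term, and it suffices to bound $|f(X_i)-f(X_i')|$ and $|f^*(Y_j)-f^*(Y_j')|$ separately and keep the worse of the two.

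The $X$-term is immediate: the definition of $\mathcal{X}(2M)$ gives $\|f\|_\infty \leq 2(2M)^2$, hence $|f(X_i) - f(X_i')| \leq 2\|f\|_\infty$. For the $Y$-term, I would argue that $f^*$, being the pointwise supremum of the affine functions $y \mapsto \langle x, y\rangle - f(x)$ with slopes $x$ restricted to the bounded set $\tilde{\Omega}$, is globally $|\tilde{\Omega}|$-Lipschitz on $\R^d$. Concretely, if $x_{\star} \in \tilde{\Omega}$ attains $f^*(y)$ (which exists by continuity of $f$ and compactness of $\tilde{\Omega}$), then
\begin{equation*}
f^*(y) - f^*(y') \;\leq\; \langle x_{\star},\, y - y'\rangle \;\leq\; |\tilde{\Omega}|\,\|y-y'\|,
\end{equation*}
and the reverse inequality follows by symmetry. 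Since the admissible data satisfy $Y_j, Y_j' \in \Omega \subset \tilde{\Omega}$, one has $\|Y_j - Y_j'\| \leq 2|\Omega| \leq 2|\tilde{\Omega}|$, so $|f^*(Y_j)-f^*(Y_j')| \leq 2|\tilde{\Omega}|^2$. Dividing both bounds by $n$ and taking the maximum yields the statement.

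There is no genuine obstacle here: the substance of the argument is just the Lipschitz property of the Fenchel–Legendre transform inherited for free from the boundedness of $\tilde{\Omega}$, and neither the twice-differentiability nor the strong convexity built into $\mathcal{X}(2M)$ is required — only the uniform $L^\infty$ bound on $f$ together with the compactness of the domain of the supremum defining $f^*$.
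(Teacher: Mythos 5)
Your proof is correct and follows essentially the same route as the paper: split into the $X$-coordinate and $Y$-coordinate cases, bound the former by $2\|f\|_\infty/n$, and establish the $|\tilde{\Omega}|$-Lipschitz property of $f^*$ to handle the latter; arguing via the maximizer $x_\star$ instead of manipulating the supremum directly is only a cosmetic difference. Your closing observation that only the $L^\infty$ bound and the compactness of $\tilde{\Omega}$ are needed (not smoothness or strong convexity) is also accurate.
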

\begin{proof}
    See \Cref{sec:proof_of_sensitivitysemidual}.
\end{proof}

Now, recalling \cref{sec:empricalestimator}, let $C_{J, M}$ be a \emph{finite} subset of $V_J \cap \mathcal{X}(2M)$. We can enumerate this set as 
\begin{equation}
    C_{J, M} = \{f_1, \dots, f_{\card{C_{J, M}}} \} \;.
\end{equation}
Considering independent random variables $L_1, \dots$, $ L_{\card{C_{J, M}}}$ with Laplace distribution that are also independent of all the other sources of randomness, we define
\begin{equation}
\begin{aligned}
\hat{i}_{\text{\normalshape priv}}(D) \in \argmin &\left\{\hat{S}(f_i | D) + \frac{32 M^2 \vee 36 d}{n \epsilon} L_i \right\} \;.\\
\textrm{for} \quad & i \in \{1, \dots, \card{C_{J, M}} \}
\end{aligned}
\end{equation}

\begin{theorem}[Privacy of the noisy semi-dual estimator]
    The mechanism returning the pair
    \begin{equation}
\label{eq:private_estimator_definition}
    (\hat{f}_{\text{\normalshape priv}} \eqdef f_{\hat{i}_{\text{\normalshape priv}}}, \hat{T}_{\text{\normalshape priv}} \eqdef \nabla \hat{f}_{\text{\normalshape priv}})
\end{equation}
is $\epsilon$-DP.
\end{theorem}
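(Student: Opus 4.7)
The plan is to reduce the statement to a direct application of the Report Noisy Argmin mechanism (\Cref{lemma:report_noisy_max}), using the sensitivity bound of \Cref{lemma:sensitivitysemidual} and closing with the post-processing property of differential privacy.

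First I would note that, in the joint mechanism \eqref{eq:private_estimator_definition}, both components $\hat{f}_{\text{\normalshape priv}} = f_{\hat{i}_{\text{\normalshape priv}}}$ and $\hat{T}_{\text{\normalshape priv}} = \nabla f_{\hat{i}_{\text{\normalshape priv}}}$ are deterministic, data-independent functions of the integer index $\hat{i}_{\text{\normalshape priv}}$: the collection $\{f_1, \dots, f_{\card{C_{J, M}}}\}$ is fixed once $J$ and $M$ are chosen and does not depend on the dataset. By the post-processing property of differential privacy, it therefore suffices to prove that $D \mapsto \hat{i}_{\text{\normalshape priv}}(D)$ is $\epsilon$-DP.

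Second, I would verify the hypothesis of \Cref{lemma:report_noisy_max} by obtaining a \emph{uniform} sensitivity bound for the queries $D \mapsto \hat{S}(f_i \mid D)$, $i = 1, \dots, \card{C_{J, M}}$. Each $f_i$ lies in $V_J \cap \mathcal{X}(2M) \subset \mathcal{X}(2M)$, so the definition of $\mathcal{X}(\cdot)$ applied with parameter $2M$ gives $\|f_i\|_\infty \leq 2(2M)^2 = 8M^2$. Since $\tilde{\Omega} = [-1,2]^d$ with the Euclidean norm, one has $|\tilde{\Omega}|^2 \leq 9d$. Plugging these two bounds into \Cref{lemma:sensitivitysemidual} yields a uniform sensitivity $\Delta \leq (16M^2 \vee 18d)/n$, valid across all $f_i \in C_{J, M}$.

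Finally, I would invoke \Cref{lemma:report_noisy_max} with this $\Delta$. The required Laplace noise scale is $2\Delta/\epsilon$, which is bounded above by $(32M^2 \vee 36d)/(n\epsilon)$, precisely the scale used in the definition of $\hat{i}_{\text{\normalshape priv}}$. Since inflating the noise scale only strengthens privacy, the lemma yields that $\hat{i}_{\text{\normalshape priv}}$ is $\epsilon$-DP, and post-processing transfers this to the full output pair. I do not expect any substantive obstacle: the proof is essentially a composition of the two previous lemmas, and the only real care is bookkeeping to confirm that the noise scale in the construction indeed exceeds $2\Delta/\epsilon$ uniformly over $C_{J, M}$.
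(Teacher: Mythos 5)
Your proposal is correct and follows essentially the same route as the paper: apply the report-noisy-argmin guarantee (\Cref{lemma:report_noisy_max}) with the uniform sensitivity bound from \Cref{lemma:sensitivitysemidual} for $f_i \in \mathcal{X}(2M)$, then conclude by post-processing. The paper's proof is just this two-step composition stated without the constant bookkeeping, which you carry out correctly (your bounds are valid upper estimates and the resulting noise scale is dominated by the $\tfrac{32M^2 \vee 36d}{n\epsilon}$ used in the mechanism).
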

\begin{proof}
    It follows as a direct consequence of \Cref{lemma:report_noisy_max} and of \Cref{lemma:sensitivitysemidual} that $\hat{i}_{\text{\normalshape priv}}$ is $\epsilon$-DP. 
    Hence, $(\hat{f}_{\text{\normalshape priv}}, \hat{T}_{\text{\normalshape priv}})$ is $\epsilon$-DP by the post-processing property of differential privacy \cite{dwork2014algorithmic}.
\end{proof}
We will use the estimator defined in \Cref{eq:private_estimator_definition} as our estimator for both the optimal Brenier potential and the optimal transport map. 

A question that remains unanswered in this section is how to choose $J$ and $C_{J, M}$. 
So far, we have treated these quantities as hyperparameters. 
The next section explains how to choose them in order to optimize the bias-variance trade-off.

\section{Statistical Upper Bound}
\label{sec:upperbound}

\begin{figure*}[h!]
        \centering
        \includegraphics[width=0.94\linewidth]{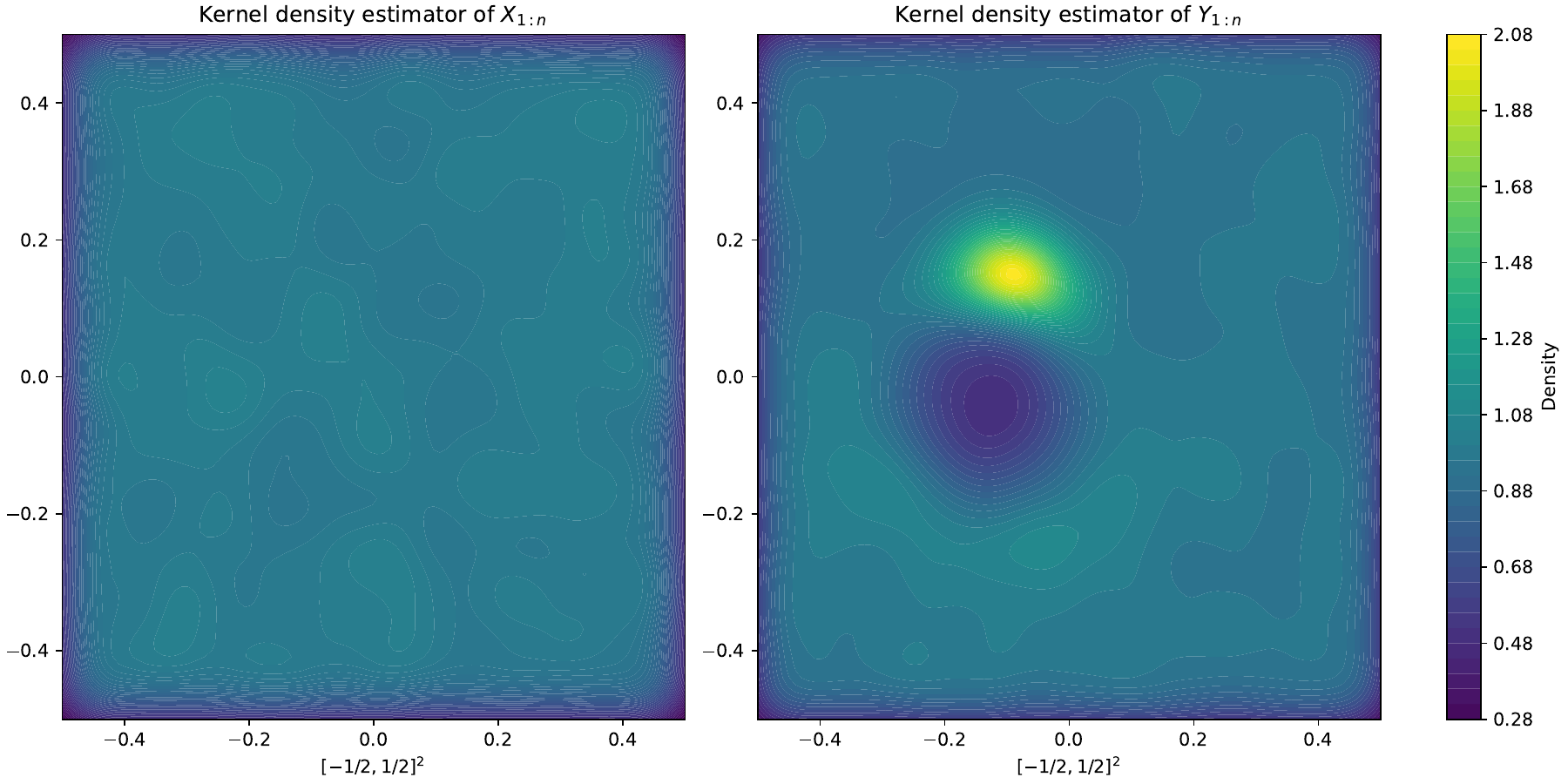}  
    \\
    \centering
    $n = 200000$, $\alpha_1=\alpha_2 = 0.005$, $\sigma= \sigma_1=\sigma_2 = 0.1$.
    \caption{Kernel density estimators of the samples used in the experiments}
    \label{fig:samples}
    \end{figure*}

This section details the construction of $C_{J, M}$ and gives an upper bound on the utility of the proposed private estimator.

\subsection{Covering construction}

We now construct the set $C_{J, M}$ such that it forms a \emph{$\delta$-covering} of the space $V_J \cap \mathcal{X}(2M)$ with respect to the empirical criterion $\hat{S}(\cdot | X_{1:n}, Y_{1:n})$. 
Specifically, we aim to ensure that $C_{J, M} \subset V_J \cap \mathcal{X}(2M)$ and that
\begin{equation}
\begin{aligned}
    \forall f \in &V_J \cap \mathcal{X}(2M), \exists f_c \in C_{J, M} \text{ s.t. } \\
    &| \hat{S}(f | X_{1:n}, Y_{1:n}) - \hat{S}(f_c | X_{1:n}, Y_{1:n}) | \leq \delta \;.
\end{aligned}
\end{equation}
A first step towards this goal is to observe that $\hat{S}(\cdot | X_{1:n}, Y_{1:n})$ is Lipschitz-continuous with respect to the $L^\infty$ norm, as shown in the following lemma.
\begin{lemma}
\label{lemma:from_semidual_to_infty}
    For any $f_1, f_2 \in \mathcal{X}(2M)$, 
    \begin{equation}
        \left| \hat{S}(f_1 | X_{1:n}, Y_{1:n}) - \hat{S}(f_2 | X_{1:n}, Y_{1:n}) \right| \leq 2 \| f_1 - f_2\|_{\infty}
    \end{equation}
    for any dataset $(X_{1:n}, Y_{1:n})$.
\end{lemma}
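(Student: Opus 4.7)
\begin{sproof}
The plan is to split $\hat{S}$ into its primal and dual contributions and bound each separately by $\|f_1 - f_2\|_\infty$. Using the definition
\begin{equation*}
    \hat{S}(f | X_{1:n}, Y_{1:n}) = \frac{1}{n}\sum_{i=1}^n f(X_i) + \frac{1}{n}\sum_{i=1}^n f^*(Y_i),
\end{equation*}
the triangle inequality gives
\begin{equation*}
    \left|\hat{S}(f_1 | \cdot) - \hat{S}(f_2 | \cdot)\right| \leq \frac{1}{n}\sum_{i=1}^n |f_1(X_i) - f_2(X_i)| + \frac{1}{n}\sum_{i=1}^n |f_1^*(Y_i) - f_2^*(Y_i)|.
\end{equation*}
The first sum is trivially bounded by $\|f_1 - f_2\|_\infty$ since each $X_i$ lies in $\tilde\Omega$ (by the membership of $X_i$ in $\Omega \subset \tilde\Omega$), so I only need to bound the second sum.

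For the dual part, the key observation is that taking a Fenchel--Legendre transform is $1$-Lipschitz with respect to the uniform norm. First I would recall that for any two bounded functions $g_1, g_2$ on a set $S$, one has
\begin{equation*}
    \Bigl|\sup_{x \in S} g_1(x) - \sup_{x \in S} g_2(x)\Bigr| \leq \sup_{x \in S} |g_1(x) - g_2(x)|.
\end{equation*}
Applying this to $g_j(x) = \langle x, y\rangle - f_j(x)$ on $S = \tilde\Omega$ immediately yields
\begin{equation*}
    |f_1^*(y) - f_2^*(y)| \leq \sup_{x \in \tilde\Omega} |f_1(x) - f_2(x)| \leq \|f_1 - f_2\|_\infty
\end{equation*}
for every $y$, and in particular for every $Y_i$.

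Combining the two bounds gives $|\hat{S}(f_1 | \cdot) - \hat{S}(f_2 | \cdot)| \leq 2\|f_1 - f_2\|_\infty$, which is the claim. The only subtlety — hardly an obstacle — is to make sure one uses the definition of $f^*$ with the supremum restricted to $\tilde\Omega$, matching the convention set earlier in the paper for extended convex functions; the same one-sided inequality argument applies verbatim regardless of the choice of domain, so neither convexity nor smoothness of $f_1, f_2$ is needed beyond ensuring the suprema are finite, which follows from $f_j \in \mathcal{X}(2M)$.
\end{sproof}
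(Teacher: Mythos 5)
Your proposal is correct and follows essentially the same route as the paper: the paper's $\sup$-$\inf$ manipulation showing $f_1^*(y) - f_2^*(y) \leq \sup_x (f_2(x) - f_1(x))$ is precisely a spelled-out, one-sided version of the standard fact you invoke, namely that the supremum operator is $1$-Lipschitz for the uniform norm, and the rest (triangle inequality over the two empirical averages) is identical.
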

\begin{proof}
See \Cref{proof_of_from_semidual_to_infty}
\end{proof}
Thus, a $\delta >0$-covering of $V_J \cap \mathcal{X}(2M)$ for $\hat{S}(\cdot | X_{1:n}, Y_{1:n})$ can be built from a $\frac{\delta}{2}$-covering of $V_J \cap \mathcal{X}(2M)$ for $\| \cdot\|_{\infty}$. 
The following lemma bounds the cardinality of a specific covering of this form.
\begin{lemma}
\label{lemma:covering_cardinality}
    There exists a $\delta$-covering of $V_J \cap \mathcal{X}(2M)$ for $\| \cdot\|_{\infty}$ of cardinality at most $N$ where 
    \begin{equation}
        \ln(N) \lesssim 2^{Jd} \ln \p{\frac{C 2^{J d / 2} }{\delta} + 1}
    \end{equation}
    for some constant $C$ that does not depend on $J$ or $\delta$ 
\end{lemma}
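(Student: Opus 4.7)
The plan is to reduce the covering problem to the discretization of wavelet coefficients. Any $f \in V_J$ has a unique expansion $f = \sum_{(j,k,g) \in I_J} c_{j,k,g} \Psi_k^{j,g}$ with $I_J = \{(j,k,g) : j \leq J,\, k \in K_j,\, g \in G^j\}$. Since $\tilde{\Omega}$ is bounded and the mother/father wavelets have compact support, $|K_j| \lesssim 2^{jd}$, so the dimension $D_J = |I_J|$ satisfies $D_J \lesssim 2^{Jd}$. For $f \in V_J \cap \mathcal{X}(2M)$, the uniform bound $\|f\|_{\infty} \leq 8M^2$ together with $L^2$-orthonormality of the wavelet basis (and standard care taken near the boundary of $\tilde{\Omega}$ using the compact supports to get a uniform $L^2(\R^d)$-bound) yields a uniform bound $|c_{j,k,g}| \leq B$ for some $B \lesssim M^2$. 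Hence the coefficient vector of any $f \in V_J \cap \mathcal{X}(2M)$ lies in the box $[-B,B]^{D_J}$.

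Next, I overlay a uniform $\eta$-grid on $[-B,B]^{D_J}$, producing at most $(C B/\eta)^{D_J}$ grid points. For each grid point whose associated function in $V_J$ is within $\|\cdot\|_{\infty}$-distance $\delta/2$ of some element of $V_J \cap \mathcal{X}(2M)$, I select one such element as a center; this yields a candidate set contained in $V_J \cap \mathcal{X}(2M)$ of the same cardinality. To justify that the resulting set is indeed a $\delta$-covering, I use two wavelet facts: $\|\Psi_k^{j,g}\|_{\infty} \lesssim 2^{jd/2}$ (the wavelets are $L^2$-normalized and compactly supported at scale $2^{-j}$), and at any point $x \in \tilde{\Omega}$ only $O(1)$ wavelets per scale $(j,g)$ contain $x$ in their support. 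Summing scale by scale, for two functions whose coefficients differ pointwise by at most $\eta$,
\[
\|f_1 - f_2\|_{\infty} \lesssim \sum_{j \leq J} 2^{jd/2}\, \eta \lesssim \eta \cdot 2^{Jd/2}.
\]
Choosing $\eta \asymp \delta/2^{Jd/2}$ therefore guarantees that the $\eta$-grid is a $\delta/2$-net of $V_J \cap \mathcal{X}(2M)$ inside $V_J$, and the selection step turns it into a $\delta$-covering of $V_J \cap \mathcal{X}(2M)$ by elements of itself.

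Counting the grid yields $N \lesssim (C B\cdot 2^{Jd/2}/\delta)^{D_J}$, and taking logarithms gives the announced bound $\ln N \lesssim 2^{Jd}\ln(C\cdot 2^{Jd/2}/\delta + 1)$ (the additive $+1$ absorbs the regime of large $\delta$). The main technical hurdle is the approximation bound in the second paragraph: a naive triangle inequality summing $\|\Psi_i\|_{\infty}$ over all basis indices would cost a multiplicative factor $D_J \cdot 2^{Jd/2} \asymp 2^{3Jd/2}$ instead of $2^{Jd/2}$, producing a bound with a strictly larger power of $2^{Jd}$ inside the logarithm. Exploiting the local finiteness of the wavelet family at each point of $\tilde{\Omega}$ is what saves the missing $2^{Jd}$ factor and yields the clean dependence stated in the lemma.
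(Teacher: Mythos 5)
Your proof is correct and follows essentially the same route as the paper: bound the wavelet coefficients of elements of $V_J \cap \mathcal{X}(2M)$ using the sup-norm bound and Parseval, discretize the coefficient box of dimension $\lesssim 2^{Jd}$, re-center the grid points onto $V_J \cap \mathcal{X}(2M)$, and pay a factor $2^{Jd/2}$ to pass from the coefficient sup-norm to the functional $\|\cdot\|_{\infty}$ (the paper invokes its Lemma B.3, i.e.\ Lemma 24 of H\"utter--Rigollet, where you re-derive the same bound via local finiteness of the wavelets). No gaps; the argument matches the paper's proof up to presentation.
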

\begin{proof}
See \Cref{proof_of_covering_cardinality}
\end{proof}

\subsection{Main result}

We now have all the pieces to bound the error of $\hat{T}_{\text{\normalshape priv}}$. Indeed, we can apply \Cref{lemma:risk_decomposition} with an lower bound on the empirical semi-dual error ($U$) that depends on (i) the resolution $\delta$ of the covering, and (ii) the uncertainty created by privacy $\max \left\{ |L_1|, \dots, |L_{\card{C_{J, M}}}| \right\}$. The remaining approximation bias $ \inf_{T \in V_J \cap \mathcal{X}(2M)} d(T, T_0)^2$ is controlled as in \cite{hutter2021minimax}.

\begin{theorem}
\label{theorem:bias_variance_tradeoff}
    Let $P\in\mathcal{M},T_0\in\mathcal{T}_\alpha$. 
    By choosing $\delta$ appropriately, the estimator $\hat{T}_{\text{\normalshape priv}}$ has a utility satisfying
    \begin{equation}
    \begin{aligned}
        \E &\p{\int \| T_0 - \hat{T}_{\text{\normalshape priv}}  \|^2 dP} \\
        &\lesssim R^2 2^{-2 J \alpha} + \frac{J 2^{J (d-2)}\ln(n)}{n}+ \frac{1}{n} +  \frac{2^{Jd} \ln (n \epsilon) }{n \epsilon}  
    \end{aligned}
    \end{equation}
for $J$ larger than a sufficiently large constant that also depend on $R$.
\end{theorem}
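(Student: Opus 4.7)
The plan is to apply the risk decomposition of \Cref{lemma:risk_decomposition} term by term, leveraging the covering construction and the report-noisy-min structure of $\hat{f}_{\text{\normalshape priv}}$. Since $\hat{f}_{\text{\normalshape priv}}$ is selected from a finite subset $C_{J, M} \subset V_J \cap \mathcal{X}(2M)$, both the measurability assumption on $\hat{f}_{\text{\normalshape priv}}$ and its membership in $V_J \cap \mathcal{X}(2M)$ hold by construction. What remains is to (i) produce a $\Priv$-measurable almost-sure upper bound $U$ on the empirical suboptimality $\hat{S}(\hat{f}_{\text{\normalshape priv}}) - \hat{S}(\hat{f}_J)$, (ii) take expectation over the Laplace noise, and (iii) control the approximation bias $\inf_{T \in V_J \cap \mathcal{X}(2M)} d(T, T_0)^2$.

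For step (i), I would construct $C_{J, M}$ as a $\delta$-covering of $V_J \cap \mathcal{X}(2M)$ for $\hat{S}(\cdot \mid X_{1:n}, Y_{1:n})$ by combining \Cref{lemma:from_semidual_to_infty} with \Cref{lemma:covering_cardinality}: take a $(\delta/2)$-covering in sup-norm, of cardinality $N$ with $\ln N \lesssim 2^{Jd}\ln(C 2^{Jd/2}/\delta + 1)$, which is automatically a $\delta$-covering for $\hat{S}$. By the covering property, there exists an index $c$ with $\hat{S}(f_c) \leq \hat{S}(\hat{f}_J) + \delta$. By definition of the report-noisy-argmin, writing $K \eqdef 32 M^2 \vee 36 d$,
\begin{equation*}
\hat{S}(\hat{f}_{\text{\normalshape priv}}) + \frac{K}{n\epsilon} L_{\hat{i}_{\text{\normalshape priv}}} \leq \hat{S}(f_c) + \frac{K}{n\epsilon} L_c,
\end{equation*}
so that
\begin{equation*}
\hat{S}(\hat{f}_{\text{\normalshape priv}}) - \hat{S}(\hat{f}_J) \leq \delta + \frac{2 K}{n\epsilon}\max_{1 \leq i \leq N}|L_i| \eqdef U,
\end{equation*}
which is the required $\Priv$-measurable bound.

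For step (ii), I would plug $U$ into \Cref{lemma:risk_decomposition} and take outer expectation over $\Priv$. Since the other three terms on the right-hand side are deterministic, the standard tail bound $\E[\max_{1 \leq i \leq N}|L_i|] \lesssim 1 + \ln N$ for i.i.d.\ standard Laplace variables yields
\begin{equation*}
\E\bigl[d(\hat{T}_{\text{\normalshape priv}}, T_0)^2\bigr] \lesssim \delta + \frac{2^{Jd}\ln(C 2^{Jd/2}/\delta + 1)}{n\epsilon} + \frac{J 2^{J(d-2)}\ln n}{n} + \frac{1}{n} + \inf_{T \in V_J \cap \mathcal{X}(2M)} d(T, T_0)^2.
\end{equation*}
For step (iii), the approximation bias $\inf_{T \in V_J \cap \mathcal{X}(2M)} d(T, T_0)^2 \lesssim R^2 2^{-2J\alpha}$ is inherited directly from the wavelet approximation result for Hölder-smooth transport maps already used by \citet{hutter2021minimax}; this is the step that requires $J$ to exceed a constant depending on $R$.

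It remains to calibrate $\delta$. Choosing $\delta$ of order $(n\epsilon)^{-1}$ (or any polynomially small quantity in $n\epsilon$) absorbs the term $\delta$ into $1/n$ and reduces the logarithm to $\ln(C 2^{Jd/2}/\delta + 1) \lesssim Jd + \ln(n\epsilon) \lesssim \ln(n\epsilon)$ in the regime where $2^{Jd}$ is polynomial in $n\epsilon$, which is exactly the regime of interest once $J$ is optimized. This yields the claimed bound. The main obstacle I anticipate is precisely this calibration of $\delta$ against $\ln N$: too small a $\delta$ inflates the log factor, while too large a $\delta$ adds a first-order term. Once the balance is struck, the rest of the proof is bookkeeping of constants and logarithmic factors, with \Cref{lemma:risk_decomposition} and \Cref{lemma:covering_cardinality} doing the heavy lifting.
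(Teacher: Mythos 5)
Your proposal follows essentially the same route as the paper's proof: verify the hypotheses of \Cref{lemma:risk_decomposition}, use \Cref{lemma:from_semidual_to_infty} and \Cref{lemma:covering_cardinality} to turn the sup-norm covering into a covering for $\hat{S}$, bound the empirical suboptimality by $\delta$ plus a maximum of Laplace variables via the report-noisy-argmin inequality, integrate with $\E[\max_i |L_i|] \lesssim 1+\ln N$, and import the bias bound $R^2 2^{-2J\alpha}$ from \citet{hutter2021minimax}. The only (minor) divergence is the calibration: the paper takes $\delta \asymp 2^{Jd/2}/(n\epsilon)$, which makes the logarithm exactly $\ln(1+n\epsilon)$ uniformly in $J$, whereas your choice $\delta \asymp (n\epsilon)^{-1}$ leaves an extra $Jd$ inside the logarithm that you must argue away by restricting to the regime where $2^{Jd}$ is polynomial in $n\epsilon$ — harmless for the final rate, but the paper's choice yields the stated per-$J$ bound without that restriction.
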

\begin{proof}
    See \Cref{proof_of_bias_variance_tradeoff}
\end{proof}

Ignoring poly-logarithmic terms and treating $R$ as a constant for conciseness, we can optimize over $J$ to obtain an asymptotic upper bound on the error as follows
\begin{equation}
\label{eq:upper_bound}
    \begin{aligned}
        \E &\p{\int \| T_0 - \hat{T}_{\text{\normalshape priv}}  \|^2 dP} \\
        &\qquad\lesssim n^{-1} \vee n^{-\frac{2 \alpha}{2 \alpha - 2 + d}} \vee (n\epsilon)^{-\frac{2 \alpha}{2 \alpha + d}}   \;.
    \end{aligned}
    \end{equation}

    \begin{remark}
    It is possible to obtain similar bounds that hold with high probability with minimal adaptations to the proofs.
\end{remark}

\section{Lower Bound}
\label{sec:lowerbound}

Packing arguments from \cite{hutter2021minimax} as well as coupling arguments \cite{acharya2018differentially,acharya2021differentially,lalanne2022statistical} can be applied to handle the DP nature of the constraint, and yield the following lower bound.

\begin{theorem}[Lower Bound]
\label{theorem:lowerbound}
Asymptotically,
\begin{equation}
\begin{aligned}
    \inf_{\hat{T}_{\text{\normalshape priv}}} \sup_{P \in \mathcal{M}, T_0 \in \mathcal{T}_{\alpha}}
   &\E\p{\int \| T_0 - \hat{T}_{\text{\normalshape priv}}  \|^2 dP} \\
   &\gtrsim  \frac{1}{n} \vee n^{- \frac{2 \alpha}{2 \alpha - 2 + d}} \vee (n\epsilon)^{- \frac{2 \alpha}{\alpha - 1 + d}}
\end{aligned}
\end{equation}
    where the infimum is taken over all estimators $\hat{T}_{\text{\normalshape priv}}$'s that are $\epsilon$-DP.
\end{theorem}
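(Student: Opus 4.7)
The lower bound splits into two pieces. The first two terms $n^{-1}\vee n^{-2\alpha/(2\alpha-2+d)}$ are inherited directly from the non-private minimax bound of \cite{hutter2021minimax} recalled earlier, since any $\epsilon$-DP mechanism is in particular a randomized estimator. The privacy-specific term $(n\epsilon)^{-2\alpha/(\alpha-1+d)}$ is the genuine contribution. I would obtain it by combining Assouad's lemma applied to a wavelet-style bump construction with the coupling argument of \cite{acharya2018differentially,acharya2021differentially,lalanne2022statistical} controlling the TV distance between the laws of an $\epsilon$-DP mechanism on neighboring product distributions.

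\textbf{Hypothesis construction.} Fix $P$ as the uniform distribution on $\Omega=[0,1]^d$ (it belongs to $\mathcal{M}$). For a parameter $K$ to be optimized, tile $\Omega$ into $N\eqdef K^d$ sub-cubes of side $1/K$ with corners $x_1,\dots,x_N$. Fix a smooth scalar bump $F:\R^d\to\R$ compactly supported in $(0,1)^d$, and for each $\sigma\in\{-1,+1\}^N$ set
\[
f_\sigma(x)\eqdef \tfrac{1}{2}\|x\|^2 + h_s \sum_{i=1}^N \sigma_i\, F\p{K(x-x_i)}, \qquad T_\sigma\eqdef\nabla f_\sigma, \qquad Q_\sigma\eqdef (T_\sigma)_\# P.
\]
Choosing $h_s\asymp K^{-(\alpha+1)}$ guarantees $\|T_\sigma-\mathrm{Id}\|_{C^\alpha(\tilde\Omega)} = O(h_s K^{\alpha+1}) = O(1)$ and Hessian perturbation $O(h_s K^2)=O(K^{1-\alpha}) = o(1)$, so $f_\sigma$ is strongly convex and $T_\sigma\in\mathcal{T}_\alpha(R)$ for $K$ larger than a constant depending on $R$. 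Since the $N$ bumps have disjoint supports, a direct computation yields the separation
\[
\|T_\sigma - T_{\sigma'}\|^2_{L^2(P)} \asymp (h_s K)^2 K^{-d}\, \ham{\sigma}{\sigma'}.
\]

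\textbf{Assouad plus DP coupling.} Assouad's lemma applied to this hypercube gives
\[
\inf_{\hat T_{\text{\normalshape priv}}}\sup_\sigma \E\p{\|\hat T_{\text{\normalshape priv}} - T_\sigma\|^2_{L^2(P)}} \gtrsim N\cdot (h_s K)^2 K^{-d}\cdot \min_i (1-\mathrm{TV}_i) \asymp (h_s K)^2 \cdot \min_i (1-\mathrm{TV}_i),
\]
where $\mathrm{TV}_i$ is the total variation between the outputs of the mechanism on $P^n\otimes Q_\sigma^n$ and $P^n\otimes Q_{\sigma\oplus e_i}^n$. Expanding the pushforward density using $T_\sigma=\mathrm{Id}+O(h_s K)$ shows that the density difference has magnitude $\asymp h_s K^2$ on a single cell of volume $K^{-d}$, hence $\tv{Q_\sigma}{Q_{\sigma'}} \lesssim h_s K^{2-d}$ for $\sigma,\sigma'$ differing in one bit. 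Only the $Y$-part of the dataset depends on $\sigma$, so by the Acharya-Sun-Zhang coupling inequality applied to an $\epsilon$-DP mechanism, $\mathrm{TV}_i \lesssim \epsilon n \tv{Q_\sigma}{Q_{\sigma'}} \lesssim \epsilon n h_s K^{2-d}$.

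\textbf{Optimization and main obstacle.} Requiring $\mathrm{TV}_i\le 1/2$ imposes $h_s \lesssim K^{d-2}/(n\epsilon)$, which combined with the smoothness constraint $h_s \lesssim K^{-(\alpha+1)}$ is saturated at $K\asymp (n\epsilon)^{1/(\alpha+d-1)}$. The Assouad lower bound then becomes $(h_s K)^2 \asymp K^{-2\alpha} \asymp (n\epsilon)^{-2\alpha/(\alpha-1+d)}$, and combining with the non-private bound of \cite{hutter2021minimax} yields the three-term maximum. The main technical effort I expect lies in (i) checking the admissibility $T_\sigma\in\mathcal{T}_\alpha(R)$ in full detail, namely strong convexity of $f_\sigma$, the uniform bound $\|T_\sigma\|_\infty\le M$, and the support condition $(T_\sigma)_\# P(\Omega)=1$---the last one forces the bumps to be centered strictly inside $\Omega$ and $K$ to be above a constant depending on $R$---and (ii) spelling out the DP coupling inequality in a form that handles the product structure $P^n\otimes Q_\sigma^n$ with only one factor depending on $\sigma$, so that the effective expected Hamming distance in the coupling is $n\cdot \tv{Q_\sigma}{Q_{\sigma'}}$. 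The rest of the argument is routine.
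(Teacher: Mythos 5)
Your proposal is correct and follows essentially the same route as the paper's proof: the identical quadratic-plus-disjoint-bumps potential construction (with $K=1/h$ and $h_s=h^{\alpha+1}$ just a reparametrization), the same separation $\asymp h^{2\alpha+d}$ per flipped bit, the same one-bit TV bound $\lesssim h^{\alpha-1+d}$ from the determinant expansion of the pushforward density, and the same DP-Assouad argument via couplings from Acharya et al., optimized at $h\asymp(n\epsilon)^{-1/(\alpha-1+d)}$. The only cosmetic difference is that the paper states the coupling bound as a factor $e^{-n\epsilon\,\mathrm{TV}}$ rather than $1-O(n\epsilon\,\mathrm{TV})$, which is equivalent in the relevant regime, and it spells out in detail the admissibility and support-stability checks (including a Brouwer-type argument for $\nabla\phi_\theta(\Omega)=\Omega$) that you correctly flag as the main technical effort.
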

\begin{proof}
    See \Cref{sec:proof_of_lowerbound}
\end{proof}

We observe that the term in $n\epsilon$ in the lower bound above does not match the corresponding term in the upper bound \eqref{eq:upper_bound}. 
It remains unclear which, if any, of these bounds is tight. 
However, it is worth noting that similar coupling arguments have given optimal lower bounds in other non-parametric problems \cite{lalanne2023about,lalanne2024privatedensity}. We thus believe that the lower bound is more likely to be optimal than the upper bound. 

Following this conjecture, the suboptimality may arise from a suboptimal analysis of the estimator, a suboptimal choice of a covering, or limitations of the proposed estimator itself. Furthermore, our estimator is, to the best of our knowledge, the only one that has been proposed to solve the problem.

Another consequence of this lower bound is that the estimation is provably degraded by privacy (even with the best estimator) when $\epsilon \lesssim n^{- \frac{\alpha - 1}{2 \alpha - 2 + d}} \wedge n^{- \frac{\alpha +1 - d}{2 \alpha}}$. 

\section{Experiments}
\label{sec:experiments}

\begin{figure*}[h!]
    \begin{subfigure}{}
        \centering
        \includegraphics[width=0.48\linewidth]{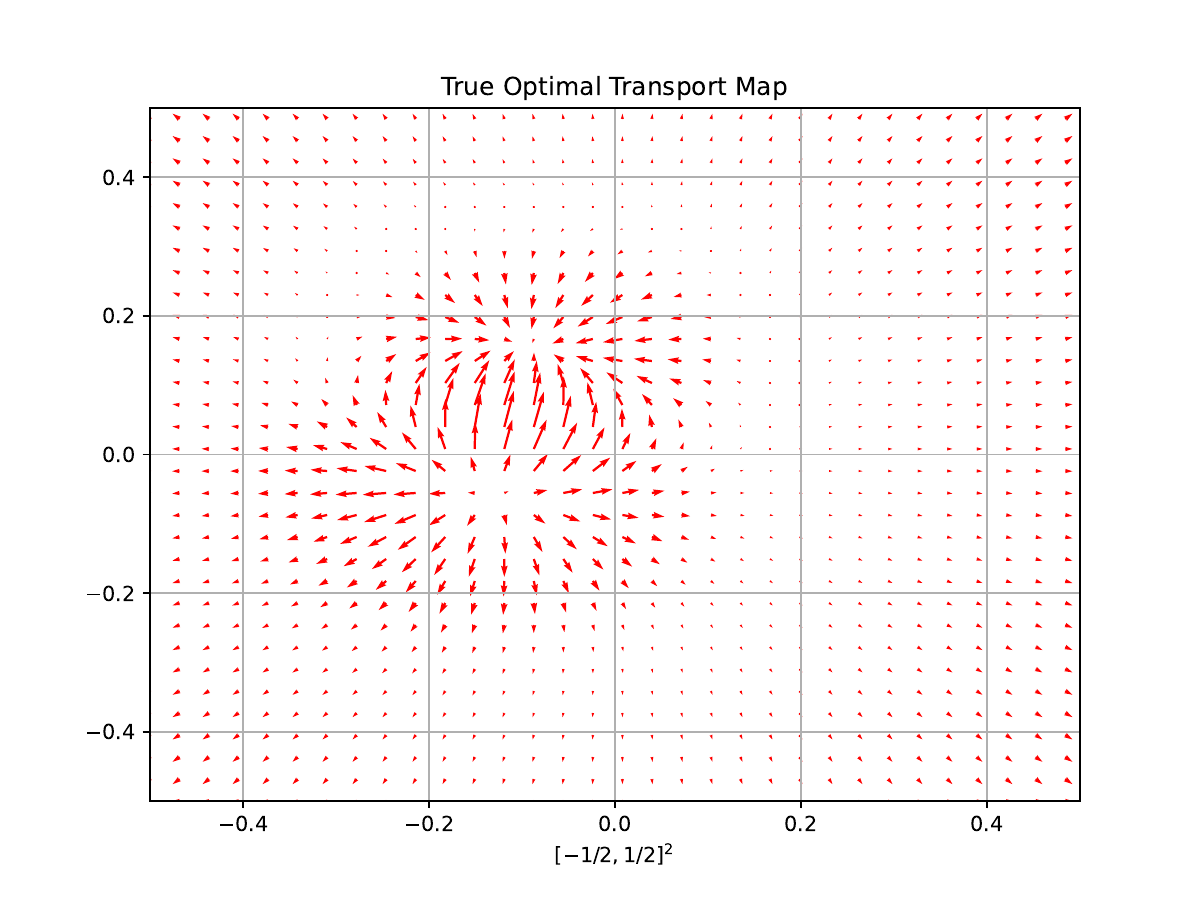}  
        \label{fig:sub-first}
    \end{subfigure}
    \begin{subfigure}{}
        \centering
        \includegraphics[width=0.48\linewidth]{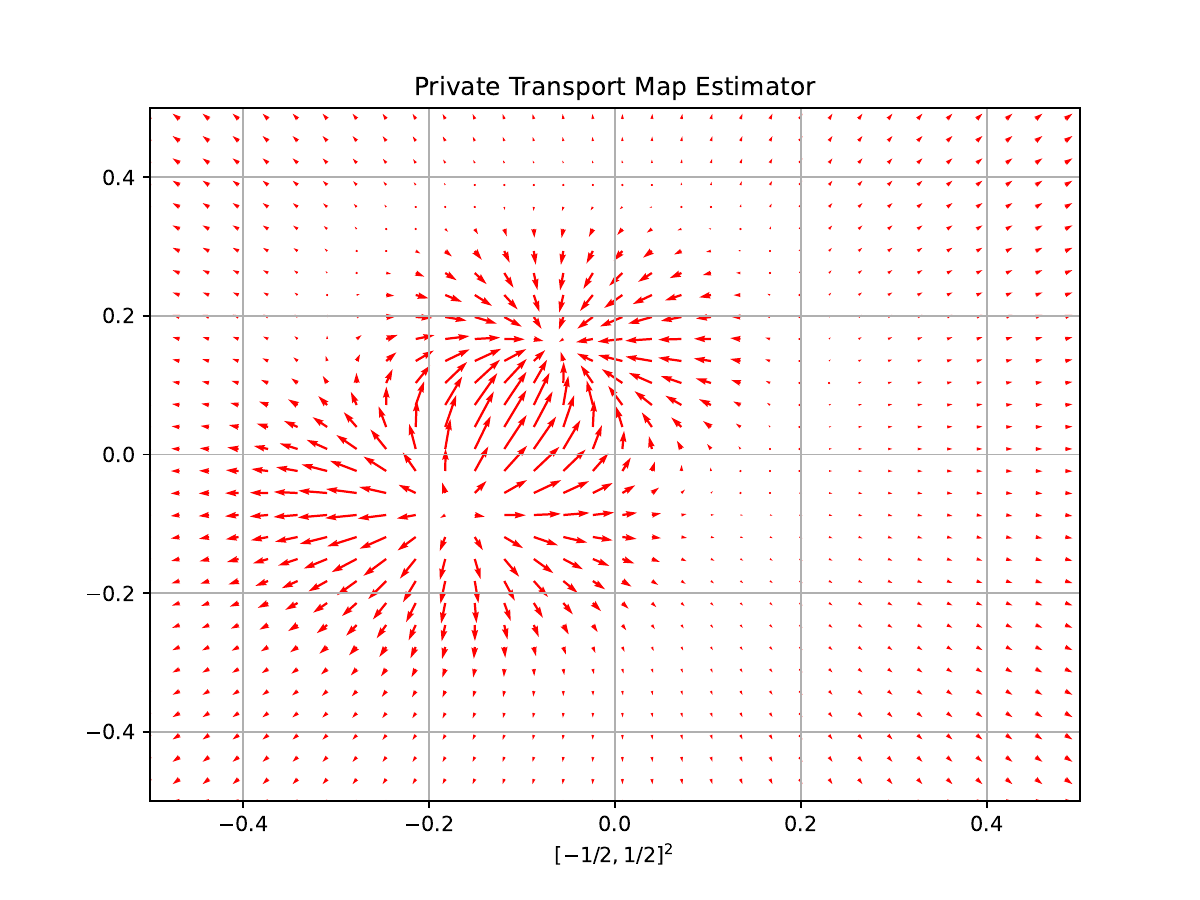}  
        \label{fig:sub-second}
    \end{subfigure} 
    \\
    \centering
    On the left: The true optimal transport map between the underlying distributions in the Gaussian attraction / repulsion model described in \Cref{sec:gaussian_attraction_repulsion_model}. On the right: The transport map that is privately estimated using the approximation algorithm described in \Cref{sec:discretized_approximation_algorithm}.
    For the numerical values, $\tilde{\Omega}^{(\text{grid})}$ is a uniform grid in $[-1/2, 1/2]^2$ with $64^2$ points, 
    $\epsilon = 1.$, $n = 200000$, $\alpha_1=\alpha_2 = 0.005$, $\sigma=  \sigma_1=\sigma_2 = 0.1$, $T = 2000$, and $C=0.25$.
    \caption{Optimal Transport Map VS Estimated Private Map}
    \label{fig:true_vs_estimates}
    \end{figure*}

This section focuses on how to numerically approximate $\hat{f}_{\text{\normalshape priv}}$ and $\hat{T}_{\text{\normalshape priv}}$ defined in \eqref{eq:private_estimator_definition} and presents numerical results to illustrate the proposed method.

\subsection{Discretization}
\label{sec:discretized_approximation_algorithm}

The main difficulty in computing $\hat{f}_{\text{\normalshape priv}}$ and $\hat{T}_{\text{\normalshape priv}}$ is that the Fenchel-Legendre transform $f^*$ of a candidate Brenier potential $f$ may not always have a closed-form expression. 
To address this issue, \cite{hutter2021minimax} approximate $f^*$ by restricting the supremum in its definition to a grid. 
This subsection develops similar ideas to adapt this method to the context of differential privacy.

Let $\tilde{\Omega}^{(\text{grid})} \subset \tilde{\Omega}$ be a \emph{finite} approximation set. 
In practice it is taken to be a uniform grid.
We then define
\begin{equation}
    f^*_{(\text{grid})}(y) = \sup_{x \in \tilde{\Omega}^{(\text{grid})}} \langle x, y \rangle - f(x), \quad \forall y \in \tilde{\Omega}.
\end{equation}
Note that the sensitivity analysis in \Cref{lemma:sensitivitysemidual} still holds under this approximation of the Fenchel dual.

We represent potentials $f$ by storing their discretization $f_{(\text{grid})}$ on the grid $\tilde{\Omega}^{(\text{grid})}$. Their gradients are approximated using finite differences, denoted by $\nabla_{(\text{grid})} f_{(\text{grid})}$.

Since we now only store the value of the potential and its (approximate) dual on a grid, we need to clip the points to this grid. We thus denote by $X_{1:n}^{(\text{grid})}$ and $Y_{1:n}^{(\text{grid})}$ the modified versions of the datasets $X_{1:n}$ and $Y_{1:n}$ where each point is replaced with its closest neighbor in $\tilde{\Omega}^{(\text{grid})}$.

Finally, if one wishes to apply \Cref{lemma:sensitivitysemidual} but cannot guarantee that $f \in \mathcal{X}(2M)$, we propose introducing a clipping constant $C$ in order to safeguard privacy. 

This procedure leads to the following practical algorithm: Start with a finite family of candidate discretized potentials
\begin{equation}
    F = \left\{ f_{(\text{grid}), 1}, \dots, f_{(\text{grid}), N} \right\} \;,
\end{equation}
then compute
\begin{equation}
\label{eq:juihgfkjhqgsjkdf}
\begin{aligned}
\hat{i} \in \argmin &\bigg\{\hat{S}_{(\text{grid})}^C(f_{(\text{grid}), i} | X_{1:n}^{(\text{grid})}, Y_{1:n}^{(\text{grid})})  + \frac{4 C}{n \epsilon} L_i \bigg\}
\end{aligned}
\end{equation}
where for any $f$, $X_{1:n}$ and $Y_{1:n}$
\begin{equation}
\begin{aligned}
    &\hat{S}_{(\text{grid})}^C(f | X_{1:n}, Y_{1:n})  \eqdef \frac{1}{n} \sum_{i = 1}^n \proj_{[-C, C]} f(X_i)  \\ 
    &\qquad\qquad\qquad\qquad\qquad+ \frac{1}{n} \sum_{i = 1}^n  \proj_{[-C, C]}f^*_{(\text{grid})}(Y_i)
\end{aligned}
\end{equation}
and where $L_1, \dots, L_N$ are independent Laplace random variables.
By \Cref{lemma:report_noisy_max}, $\hat{i}$ is $\epsilon$-DP, and by post-processing, so is $\hat{T} \eqdef \nabla_{(\text{grid})} f_{(\text{grid}), \hat{i}}$, which we use as an estimator.

\subsection{Gaussian Attraction / Repulsion Model}
\label{sec:gaussian_attraction_repulsion_model}

We illustrate the behavior of the estimator $\hat{T}$ defined in Section~\ref{sec:discretized_approximation_algorithm} on the following model, which we refer to as the Gaussian attraction/repulsion model.

We begin with a simple quadratic potential $f_{\text{quad}}(x) = \frac{1}{2} \| x \|^2$ for which the mapping induced by its gradient is the identity. 
For any $\mu$ and $\sigma > 0$, denoting the Gaussian potential by $f_{\mathcal{N}}(x| \mu, \sigma) = e^{ - \| x - \mu \|^2 / (2 \sigma^2)}$, we consider potentials of the form 
\begin{equation}
\begin{aligned}
    &f(x | \alpha_1, \alpha_2, \mu_1, \mu_2, \sigma_1, \sigma_2 ) = \\
    &\qquad\qquad f_{\text{quad}}(x) + \alpha_1 f_{\mathcal{N}}(x| \mu_1, \sigma_1) - \alpha_2 f_{\mathcal{N}}(x| \mu_2, \sigma_2)
\end{aligned}
\end{equation}
where $\alpha_1, \alpha_2, \sigma_1, \sigma_2$ are positive numbers and $\mu_1, \mu_2 \in \R^d$ are location parameters. 

Then, a random potential is generated by sampling two independent copies $\mu_1, \mu_2 \sim \mathcal{N}(0, \sigma I_2)$. 
This defines the ``true'' potential, which is the target of the estimation. 

We then sample $n$ i.i.d. uniform random variables $X_{1:n}$ over $[-1/2, 1/2]^2$, and generate the observations $Y_{1:n}$ by applying the gradient of the previously defined potential to $n$ new i.i.d. uniform random variables over $[-1/2, 1/2]^2$. 
This is illustrated in \Cref{fig:samples}, where we represent two kernel density estimators constructed on the datasets $X_{1:n}$ and $Y_{1:n}$, respectively. 

We then apply the algorithm defined in Section~\ref{sec:discretized_approximation_algorithm} where $F$ is a collection of $T$ independent potentials that are generated according to the same distribution as the true potential. 

The result of the algorithm and the optimal transport map are represented in \Cref{fig:true_vs_estimates}. 
This simulation confirms that the estimated transport map is close to the optimal one.

\section{Conclusion}
\label{sec:conclusion}

In this paper, we have introduced a private estimator for the problem of privately estimating smooth optimal transport maps. 
We have analyzed the utility of this estimator and derived a lower bound on the difficulty of the statistical problem. 
Finally, we proposed an adaptation of this theoretical estimator that is implementable in practice.

This work opens up several research directions. 
One of them is the quest for an optimal private mechanism, and another one is the design of numerically efficient algorithms.
In both cases, we believe that investigating \emph{regularization} techniques such as \emph{entropic regularization} would be an important step forward as it could help stabilize the problem, reduce the amount of privacy noise, and enable the use of numerically efficient algorithms such as Sinkhorn's algorithm \cite{cuturi2013sinkhorn}.


\section*{Acknowledgements}

This paper has been partially funded by the Agence Nationale de la Recherche under grants ANR-17-EURE-0010 (Investissements d'Avenir program), ANR-23-CE23-0029 Regul-IA, and ANR-24-CE23-1529 MAD. The authors also acknowledge the support of the AI Cluster ANITI (ANR-19-PI3A-0004).

\section*{Impact Statement}

This paper presents work whose goal is to advance the field
 of Privacy-Preserving Machine Learning. There are many potential societal
 consequences of our work, none which we feel must be
 specifically highlighted here.

\bibliography{biblio}
\bibliographystyle{icml2025}

\newpage
\appendix
\onecolumn

\section{Details on Wavelet Decomposition}
\label{sec:details_wavelet_decomposition}

In this article, we adopt the same conventions for functional spaces as in \cite{hutter2021minimax}. 
In particular, we encourage the reader to consult Appendix B in \cite{hutter2021minimax} for formal definitions of the functional spaces considered in this paper. 

A notable difference from \cite{hutter2021minimax} is that all the functions considered in this work are supported on $\tilde{\Omega}$, which is a hypercube rather than a more general Lipschitz domain. 
Therefore, we can restrict the wavelet basis to functions whose support is included in $\tilde{\Omega}$ only, thus avoiding overlaps with the complement: for any function in this basis, its support is either included in $\Tilde{\Omega}$ or in $\Bar{\Tilde{\Omega}} \cup \partial \Tilde{\Omega}$.

As a consequence, if $f \in V_J$, and if $(\gamma_k^{j, g})_{k, j, g}$ denotes its wavelet coefficients in the corresponding wavelet basis, then 
\begin{equation}
    \| f \|_{L^2(\tilde{\Omega})}^2 = \|(\gamma_k^{j, g})_{k, j, g}\|_2^2 \;.
\end{equation}

\section{Technical Tools}

\begin{lemma}
    \label{lemma:linearizationdeterminant}
    There exists an open neighborhood $O$ of $0$ in $\R^{q \times q}$ such that 
    \begin{equation}
        M \in O \quad \implies \quad | \det(I_q + M) - 1 - \tr(M)| \leq \| M \| \;,
    \end{equation}
    where $O$ is only affected by the choice of $\| \cdot \|$.
\end{lemma}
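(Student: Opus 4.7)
The plan is to observe that $\det(I_q + M)$ is a polynomial of degree $q$ in the $q^2$ entries of $M$, and to isolate its constant and linear terms via the Leibniz formula
\[
\det(I_q + M) = \sum_{\sigma \in S_q} \mathrm{sgn}(\sigma) \prod_{i=1}^q \bigl(\delta_{i,\sigma(i)} + M_{i,\sigma(i)}\bigr).
\]
Any permutation $\sigma \neq \mathrm{id}$ has at least two non-fixed indices, so the corresponding product contains at least two factors $M_{i,\sigma(i)}$ with $\sigma(i)\neq i$, and hence contributes only monomials of total degree $\geq 2$ in the $M_{ij}$. The identity permutation yields $\prod_{i=1}^q (1 + M_{ii}) = 1 + \mathrm{tr}(M) + \tilde{Q}(M)$, where $\tilde{Q}$ collects monomials of degree $\geq 2$ only.

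Consequently, $R(M) := \det(I_q + M) - 1 - \mathrm{tr}(M)$ is a polynomial in the entries of $M$ whose monomials all have degree at least $2$. Since $\R^{q \times q}$ is finite-dimensional, all norms are equivalent, so for any chosen norm $\|\cdot\|$ there exists a constant $C > 0$ (depending only on $q$ and $\|\cdot\|$) such that $|R(M)| \leq C \|M\|^2$ whenever $\|M\| \leq 1$; this follows by bounding each monomial of degree $k \geq 2$ by $\|M\|^k_{\max} \leq C' \|M\|^k \leq C' \|M\|^2$ on the unit ball and summing over the finitely many monomials.

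It then suffices to take $O$ to be the open ball of radius $\min(1, 1/C)$ centered at $0$: for every $M \in O$, we have $|R(M)| \leq C\|M\|^2 \leq \|M\|$, which is the desired inequality. The neighborhood $O$ depends on the norm only through $C$, as required.

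There is no real obstacle here; the argument is a standard first-order Taylor estimate for a polynomial function. The only care needed is to explicitly invoke the equivalence of norms on $\R^{q \times q}$ so that the bound $|R(M)| \lesssim \|M\|^2$ holds for the specific norm given in the statement, with a constant (and thus a radius for $O$) depending only on that norm.
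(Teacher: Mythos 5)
Your proof is correct. It rests on the same underlying fact as the paper's argument, namely that the first-order term of $\det(I_q+M)$ at $M=0$ is $\tr(M)$, but the route is different: the paper computes the partial derivatives of $M \mapsto \det(I_q+M)$, identifies the gradient at $0$ (for the Frobenius structure) with the identity, and invokes $\mathcal{C}^1$-differentiability to write $\det(I_q+M)=1+\tr(M)+o(\|M\|)$, finishing with equivalence of norms; you instead expand via the Leibniz formula and observe that every permutation other than the identity contributes only monomials of degree $\geq 2$, so the remainder $R(M)=\det(I_q+M)-1-\tr(M)$ is a polynomial with no constant or linear part. Your version is more elementary (no calculus, no little-$o$) and quantitative: it gives $|R(M)|\leq C\|M\|^2$ on the unit ball and hence an explicit radius $\min(1,1/C)$ for $O$, whereas the paper's soft argument only asserts the existence of a suitable neighborhood. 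One small point of care: in bounding a degree-$k$ monomial you write $\|M\|_{\max}^k \leq C'\|M\|^k$, but the norm-equivalence constant enters as $C'^k$; this is harmless since $k\leq q$ and you restrict to $\|M\|\leq 1$, so a single constant depending only on $q$ and $\|\cdot\|$ still works, but it is worth stating the uniform bound $C=\max(1,C')^q\cdot(\text{number of monomials})$ explicitly.
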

\begin{proof}
    By computing the partial derivatives of $M \mapsto \det(I_q + M)$ in the canonical basis of $\R^{q \times q}$, we immediately see that the gradient of $M \mapsto \det(I_q + M)$ at $0$ for the Euclidean structure induced by the Frobenius inner product is $I_d$. Consequently, since $M \mapsto \det(I_q + M)$ is $\mathcal{C}^1$ (it has a polynomial expression in the coefficients of $M$), 
    \begin{equation}
        \det(I_q + M) = 1 + \tr(M) + o(\| M \|) \;.
    \end{equation}
    The equivalence of norms in finite dimension allows us to conclude.
\end{proof}

\begin{lemma}
\label{lemma:maximumlaplace}
    Let $L_1, \dots, L_{M}$ be $M$ independent random variables with Laplace distribution (that is, with pdf $t \mapsto \frac{1}{2} e^{- |t|}$ w.r.t. Lebesgue's measure). Then 
    \begin{equation}
        \E \p{\max_{i = 1 , \dots, N} |L_i|  } \leq  1 + \ln \p{N} \;.
    \end{equation}
\end{lemma}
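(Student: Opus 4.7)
The plan is to exploit the fact that $|L_i|$ has an exponential distribution with rate $1$: since $L_i$ has density $\tfrac{1}{2}e^{-|t|}$ on $\R$, one immediately obtains $\Prob(|L_i| > u) = e^{-u}$ for all $u \geq 0$. From here the result is a standard maximum-of-exponentials computation, which I would carry out via the tail-integral representation of the expectation rather than going through the order statistics (either would work, but the tail-integral version produces the additive constant $1$ most cleanly).

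Concretely, I would first apply a union bound to obtain, for every $u \geq 0$,
\begin{equation}
    \Prob\!\p{\max_{i = 1, \dots, N} |L_i| > u} \leq \min\!\left( 1,\ N e^{-u} \right).
\end{equation}
Next, since $\max_i |L_i|$ is non-negative, I would use
\begin{equation}
    \E\!\p{\max_{i = 1, \dots, N} |L_i|} = \int_0^{\infty} \Prob\!\p{\max_{i = 1, \dots, N} |L_i| > u}\, du,
\end{equation}
and split the integral at the threshold $u^{\star} = \ln N$, where the two branches of the union bound cross. On $[0, \ln N]$ I would bound the integrand by $1$, contributing $\ln N$; on $[\ln N, \infty)$ I would use the bound $N e^{-u}$, whose integral from $\ln N$ to $\infty$ is exactly $1$. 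Summing these two contributions yields $\ln(N) + 1$.

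There is no serious obstacle here: the only thing to be careful about is matching the notation of the statement (the count of Laplace variables appears as both $M$ and $N$ in the excerpt; I would simply take the number of variables to be $N$, as in the bound itself). The argument is tight up to the constant $1$, matching the exact value $\sum_{k=1}^N 1/k$ that one gets by writing $\max_i |L_i|$ as a sum of rescaled exponentials via Rényi's representation, which could serve as an alternative proof should a sharper estimate ever be needed.
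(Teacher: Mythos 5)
Your proof is correct and follows essentially the same route as the paper's: both boil down to integrating the tail bound $\min\p{1, N e^{-u}}$ and splitting the integral at $u = \ln N$, the paper reaching that same bound by computing the exact CDF $\p{1 - e^{-\delta}}^N$ via independence and then applying the inequality $(1-u)^N \geq \max(0, 1-Nu)$ after the substitution $u = e^{-\delta}$. The only (minor) difference is that your union bound never uses independence, so your argument is marginally more general while yielding the identical estimate $1 + \ln N$.
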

\begin{proof}
Let $\delta \geq 0$, by independence,
\begin{equation}
\begin{aligned}
    \Prob \p{\max_{i = 1 , \dots, N} |L_i| \leq \delta } 
    &= \Prob \p{\bigcap_{i = 1 , \dots, N} (|L_i| \leq \delta) } \\
    &= \prod_{i = 1}^N \Prob \p{ (|L_i| \leq \delta) } \\
    &= \p{1 - e^{-\delta}}^N \;.
\end{aligned}
\end{equation}
Thus, 
\begin{equation}
    \begin{aligned}
        \E \p{\max_{i = 1 , \dots, N} |L_i|  }
        &= \int_{[0, +\infty)} \Prob \p{\max_{i = 1 , \dots, N} |L_i| > \delta } d\delta \\
        &= \int_{(0, +\infty)} \p{1 - \p{1 - e^{-\delta}}^N} d\delta \\
        &\stackrel{u = e^{- \delta}}{=} \int_{(0, 1)} \p{1 - \p{1 - u}^N} \frac{du}{u} \\
        &\stackrel{\p{1 - u}^N \geq \max(0, 1 - Nu)}{=} \int_{(0, 1)} \p{1 - \max(0, 1 - Nu)} \frac{du}{u} \\
        &\stackrel{}{=} \int_0^{\frac{1}{N}} \p{1 - \max(0, 1 - Nu)} \frac{du}{u} + \int_{\frac{1}{N}}^1 \p{1 - \max(0, 1 - Nu)} \frac{du}{u} \\
        &\leq 1 + \ln \p{N} \;.
    \end{aligned}
\end{equation}
\end{proof}

\begin{lemma}
\label{lemma:comparison_of_norms}
    Let $f \in V_J$, and denote by $(\gamma_k^{j, g})_{k, j, g}$ its wavelet coefficients for compactly supported mother and father wavelets, and with support adequation to $\tilde{\Omega}$ as described in \Cref{sec:details_wavelet_decomposition}. 
    Then 
    \begin{equation}
        \| (\gamma_k^{j, g})_{k, j, g}\|_{\infty}
        \leq 
         \| (\gamma_k^{j, g})_{k, j, g}\|_{2}
         \lesssim 
          \| f\|_{\infty}
          \lesssim 
          2^{\frac{J d}{2}} \| (\gamma_k^{j, g})_{k, j, g}\|_{\infty} \;.
    \end{equation}
\end{lemma}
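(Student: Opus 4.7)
The plan is to treat the three inequalities separately. The first one, $\|(\gamma_k^{j,g})\|_\infty \le \|(\gamma_k^{j,g})\|_2$, is just the standard embedding $\ell^\infty \hookrightarrow \ell^2$ applied to a (finite) family of coefficients, so there is nothing to do.

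For the second inequality $\|(\gamma_k^{j,g})\|_2 \lesssim \|f\|_\infty$, I would invoke the Parseval-type identity recalled at the end of \Cref{sec:details_wavelet_decomposition}: since we have restricted the basis to wavelets whose supports lie inside $\tilde{\Omega}$ and $f \in V_J$ has support in $\tilde{\Omega}$, one has
\begin{equation*}
    \|(\gamma_k^{j,g})_{k,j,g}\|_2^2 \;=\; \|f\|_{L^2(\tilde{\Omega})}^2 \;\le\; |\tilde{\Omega}| \cdot \|f\|_\infty^2.
\end{equation*}
Since $|\tilde{\Omega}|$ depends only on $d$, this yields $\|(\gamma_k^{j,g})\|_2 \lesssim \|f\|_\infty$.

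The third inequality, $\|f\|_\infty \lesssim 2^{Jd/2}\|(\gamma_k^{j,g})\|_\infty$, is the one requiring a touch more care. The plan is to expand $f = \sum_{j \le J, k \in K_j, g \in G^j} \gamma_k^{j,g} \Psi_k^{j,g}$ and exploit two structural facts from the wavelet construction: (i) each $\Psi_k^{j,g}$ satisfies $\Psi_k^{j,g}(x) = 2^{jd/2}\Psi^g(2^j x - k)$, so $\|\Psi_k^{j,g}\|_\infty \le 2^{jd/2}\|\Psi^g\|_\infty$, and (ii) because the mother/father wavelets have compact support, at any fixed point $x \in \tilde{\Omega}$ only $O(1)$ values of $k \in K_j$ contribute a nonzero term $\Psi_k^{j,g}(x)$, uniformly in $j$. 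Combining these, for any $x \in \tilde{\Omega}$,
\begin{equation*}
    |f(x)| \;\le\; \sum_{j\le J}\sum_{g \in G^j}\sum_{k \in K_j} |\gamma_k^{j,g}|\,|\Psi_k^{j,g}(x)| \;\lesssim\; \|(\gamma_k^{j,g})\|_\infty \sum_{j\le J} 2^{jd/2},
\end{equation*}
where the hidden constant absorbs $\#G^j$ (which is bounded uniformly in $j$) and $\max_g\|\Psi^g\|_\infty$. The geometric sum $\sum_{j\le J} 2^{jd/2}$ is dominated (up to a constant depending on $d$) by $2^{Jd/2}$, which concludes this step.

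The main obstacle I anticipate is a bookkeeping one rather than a conceptual one: carefully reconciling the boundary adaptation of the basis described in \Cref{sec:details_wavelet_decomposition} with the scaling/translation identity for $\Psi_k^{j,g}$, in particular ensuring that the ``$O(1)$ overlapping translates at each point $x$'' statement still holds for the restricted basis. Once the father-wavelet level is absorbed into $j=0$ and the finiteness of $G^j$ is noted, the rest is just summation of a geometric series.
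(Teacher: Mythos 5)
Your proposal is correct. For the first two inequalities you argue exactly as the paper does: the $\ell^\infty\le\ell^2$ embedding, then Parseval on the boundary-adapted basis combined with $\|f\|_{L^2(\tilde{\Omega})}^2\le \vol(\tilde{\Omega})\,\|f\|_\infty^2$ (note only that in the paper's notation $|\tilde{\Omega}|$ denotes $\sup_{x\in\tilde{\Omega}}\|x\|$, so you should write $\vol(\tilde{\Omega})$ here; either way it is a constant depending only on $d$). The only divergence is the last inequality $\|f\|_\infty\lesssim 2^{Jd/2}\|(\gamma_k^{j,g})\|_\infty$: the paper simply cites Lemma~24 of \cite{hutter2021minimax}, whereas you re-derive it from scratch using $\|\Psi_k^{j,g}\|_\infty\lesssim 2^{jd/2}$, the uniformly bounded number of overlapping translates at any fixed point (which indeed survives the support restriction of \Cref{sec:details_wavelet_decomposition}, since that construction only discards basis functions crossing $\partial\tilde{\Omega}$ rather than modifying them), the uniform bound on $\card{G^j}$, and the geometric sum $\sum_{j\le J}2^{jd/2}\lesssim 2^{Jd/2}$. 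This is a valid, self-contained argument; what it buys is independence from the external lemma, at the cost of the bookkeeping you yourself flag, while the paper's citation is shorter and defers those details to the reference.
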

\begin{proof}
    \begin{equation}
        \begin{aligned}
            \| f\|_{\infty} 
            &= \sqrt{\| f^2\|_{\infty} } \\
            &\geq \sqrt{ \frac{1}{\vol \p{\tilde{\Omega}}}  \int_{\tilde{\Omega}} f^2 } \\
            &\stackrel{\text{Parseval}}{=} {\sqrt{ \frac{1}{\vol \p{\tilde{\Omega}}}  \| (\gamma_k^{j, g})_{k, j, g}\|_{2}^2 }} \\
            &\gtrsim \| (\gamma_k^{j, g})_{k, j, g}\|_{2} \\
            &\geq \| (\gamma_k^{j, g})_{k, j, g}\|_{\infty} \;.
        \end{aligned}
    \end{equation}

The inequality 
\begin{equation}
    \| f\|_{\infty}
          \lesssim 
          2^{\frac{J d}{2}} \| (\gamma_k^{j, g})_{k, j, g}\|_{\infty}
\end{equation}
comes from Lemma 24 in \cite{hutter2021minimax}.

\end{proof}

\section{Proofs of \Cref{sec:problem}}

\subsection{Proof of \Cref{lemma:risk_decomposition}}
\label{proof_of_risk_decomposition}

Following subsection 5.4 in \cite{hutter2021minimax}, we use $\| \cdot \| \eqdef \| \cdot \|_{L^2(P)}$ and we define 
\begin{equation}
    \forall f, \quad S_0(f) \eqdef S(f) - S(f_0), \quad \hat{S}_0(f) \eqdef \hat{S}(f | X_{1:n}, Y_{1:n}) - \hat{S}(f_0 | X_{1:n}, Y_{1:n}) \;,
\end{equation}
\begin{equation}
    \forall \tau \geq 0, \quad \mathcal{F}_J(\tau^2) \eqdef \{ f \in V_J \cap \mathcal{X}(2M): S_0(f) \leq \tau^2 \} \;,
\end{equation}
and 
\begin{equation}
    \Bar{f} \in \argmin_{f \in V_J \cap \mathcal{X}(2M)} S_0(f)   \;,
\end{equation}
Furthermore, we define $\hat{f} = \hat{f}_J \in V_J \cap \mathcal{X}(2M)$, and, for $\sigma > 0$, we also let
\begin{equation}
    \hat{f}_s = s \hat{f}_{\text{\normalshape priv}} + (1 - s) \Bar{f}, \quad \text{ where } \quad s = \frac{\sigma}{ \sigma + \| \nabla \hat{f}_{\text{\normalshape priv}} - \nabla \Bar{f} \|} \;.
\end{equation}

Following the same steps as in Proposition 11 from \cite{hutter2021minimax}, we obtain
\begin{equation}
    \| \nabla \hat{f}_s - \nabla \Bar{f} \| = s \| \nabla \hat{f}_{\text{\normalshape priv}} - \nabla \Bar{f} \| = \frac{\sigma \| \nabla \hat{f}_{\text{\normalshape priv}} - \nabla \Bar{f} \|}{\sigma + \| \nabla \hat{f}_{\text{\normalshape priv}} - \nabla \Bar{f} \|} \leq \sigma
\end{equation}
and
\begin{equation}
    S_0 (\hat{f}_s) \leq 4 M (\sigma^2 + \|  \nabla \Bar{f} - \nabla {f}_0 \|^2) \defeq \tau^2 \;.
\end{equation}

It follows that, $\hat{f}_s \in \mathcal{F}_J(\tau^2)$ and, therefore, $\Bar{f} \in \mathcal{F}_J(\tau^2)$ since we have $S_0(\bar f) \leq S_0(\bar f_s) \leq \tau^2$ by the definition of $\bar f$. 
We now turn to controlling the error of the semi-dual. 
We have
\begin{equation}
    \begin{aligned}
        \hat{S}(\hat{f}_s)
        &= s \hat{S}(\hat{f}_{\text{\normalshape priv}}) + (1-s) \hat{S}(\Bar{f}) \\
        & \leq  s \p{\hat{S}(\hat{f}_{\text{\normalshape priv}} | X_{1:n}, Y_{1:n}) - \hat{S}(\hat{f} | X_{1:n}, Y_{1:n})} + s \hat{S}(\hat{f}) + (1-s) \hat{S}(\Bar{f}) \\
        & \leq U + \hat{S}(\Bar{f}) \\
    \end{aligned}
\end{equation}
where the last inequality comes from the optimality of $\hat{f}$ for the empirical semi-dual problem.

Hence,
\begin{equation}
    {S}_0(\hat{f}_s) \leq {S}_0(\Bar{f}) + U + 2 \sup_{f \in \mathcal{F}_J(\tau^2)} | S_0(f) - \hat{S}_0(f) | \;.
\end{equation}

The rest of the proof follows the same lines as the end of Section 5.4 in \cite{hutter2021minimax}, replacing ${S}_0(\Bar{f})$ with ${S}_0(\Bar{f}) + U$ and $\hat{f}$ with $\hat{f}_{\text{\normalshape priv}}$.

\section{Proofs of \Cref{sec:privatemechanism}}

\subsection{Proof of \Cref{lemma:report_noisy_max}}
This result is folklore in the literature of differential privacy, we include its proof for the sake of completeness.
To simplify the notation, let us look at the case where the scaling factor in front of the Laplace random variables in \eqref{eq:kjhfkjsbhjdhgfjhqbshdbfbfbfbf} is one. The general case will be obtained by scaling.
Let $D \sim D'$ be a pair of neighboring datasets, 
let $k \in \{1 , \dots, N \}$.
We will condition on $(L_{k'})_{k' \neq k}$. All that follow holds almost surely.
\begin{equation}
    \Prob \p{\hat{i}(D) = k | (L_{k'})_{k' \neq k}} 
    =
    \Prob \p{\cap_{k' \neq k} \p{f_k(D) + L_k < f_{k'}(D) + L_{k'}}| (L_{k'})_{k' \neq k}} \;.
\end{equation}
Notice that we took strict inequalities ($<$) because $L_k$ is absolutely continuous w.r.t. Lebesgue's measure conditionally on $(L_{k'})_{k' \neq k}$, and hence, ties occur only with null probability.

Thus,
\begin{equation}
\begin{aligned}
    \Prob \p{\hat{i}(D) = k | (L_{k'})_{k' \neq k}} 
    &=
    \Prob \p{L_k < \min_{k' \neq k} \p{f_{k'}(D) - f_{k}(D) + L_{k'}} \bigg| (L_{k'})_{k' \neq k}} \\
    &= \frac{1}{2} \int_{-\infty}^{\min_{k' \neq k} \p{f_{k'}(D) - f_{k}(D) + L_{k'}}} e^{- |t|} dt\;.
\end{aligned}
\end{equation}
Hence, two cases occur

\paragraph{Case A: $\min_{k' \neq k} \p{f_{k'}(D) - f_{k}(D) + L_{k'}}\leq 0$}
\begin{equation}
\label{eq:uoqshdofihquosdhfsq}
\begin{aligned}
    \Prob \p{\hat{i}(D) = k | (L_{k'})_{k' \neq k}} 
    &=
    \frac{1}{2} e^{\min_{k' \neq k} \p{f_{k'}(D) - f_{k}(D) + L_{k'}}}\;.
\end{aligned}
\end{equation}

\paragraph{Case B: $\min_{k' \neq k} \p{f_{k'}(D) - f_{k}(D) + L_{k'}} > 0$}
\begin{equation}
\label{eq:uoqshdofihquosdhfsqjndsq}
\begin{aligned}
    \Prob \p{\hat{i}(D) = k | (L_{k'})_{k' \neq k}} 
    &=
    1 - \frac{1}{2} e^{- \min_{k' \neq k} \p{f_{k'}(D) - f_{k}(D) + L_{k'}}} \;.
\end{aligned}
\end{equation}

Now, we can look at the ratio $\frac{\Prob \p{\hat{i}(D) = k | (L_{k'})_{k' \neq k}} }{\Prob \p{\hat{i}(D') = k | (L_{k'})_{k' \neq k}} }$. First, we can notice that
\begin{equation}
\label{eq:jbckvjhbihuqbisubdhfhjqsbdjhfbdhjskq}
    \begin{aligned}
        \left| \p{\min_{k' \neq k} \p{f_{k'}(D) - f_{k}(D) + L_{k'}}} - \p{\min_{k' \neq k} \p{f_{k'}(D') - f_{k}(D') + L_{k'}}} \right| \leq 2 \Delta \;.
    \end{aligned}
\end{equation}
Indeed, since the sensitivities of $(f_k)_k$ are uniformly bounded, for any $k' \neq k$,
\begin{equation}
    f_{k'}(D) - f_{k}(D) \leq (f_{k'}(D') + \Delta) - (f_{k}(D') - \Delta) = f_{k'}(D') - f_{k}(D') + 2 \Delta \;,
\end{equation}
and taking the minimum yields half of the result. The other half is given by swapping $D$ and $D'$.

Then, by using $t(D)$ as a short for $\min_{k' \neq k} \p{f_{k'}(D) - f_{k}(D) + L_{k'}}$, we can look at the following four cases:

\paragraph{Case 1: $t(D), t(D') \leq 0$}

\begin{equation}
    \begin{aligned}
        \frac{\Prob \p{\hat{i}(D) = k | (L_{k'})_{k' \neq k}}}{\Prob \p{\hat{i}(D') = k | (L_{k'})_{k' \neq k}}}
        &\stackrel{\eqref{eq:uoqshdofihquosdhfsq}}{=}
        \frac{\frac{1}{2} e^{t(D)}}{\frac{1}{2} e^{t(D')}}
        \leq e^{|t(D) - t(D')|}
        \stackrel{\eqref{eq:jbckvjhbihuqbisubdhfhjqsbdjhfbdhjskq}}{\leq} e^{2 \Delta} \;.
    \end{aligned}
\end{equation}

\paragraph{Case 2: $t(D)\leq 0 < t(D')$}

\begin{equation}
    \begin{aligned}
        \frac{\Prob \p{\hat{i}(D) = k | (L_{k'})_{k' \neq k}}}{\Prob \p{\hat{i}(D') = k | (L_{k'})_{k' \neq k}}}
        &\stackrel{\eqref{eq:uoqshdofihquosdhfsq} \& \eqref{eq:uoqshdofihquosdhfsqjndsq}}{=}
        \frac{\frac{1}{2} e^{t(D)}}{1 - \frac{1}{2} e^{-t(D')}}
        =
        e^{t(D') - t(D)} \frac{\frac{1}{2} e^{t(D)-t(D')}}{e^{-t(D)} - \frac{1}{2} e^{-t(D')-t(D)}} \;.
    \end{aligned}
\end{equation}
Since $t(D)\leq 0 < t(D')$, 
\begin{equation}
    e^{t(D)-t(D')} \leq 1 \;.
\end{equation}
Furthermore, a simple study of the function $x \mapsto e^{-x} - \frac{1}{2} e^{-t(D')-x}$ on $(-\infty, 0]$ shows that it is non-increasing (because $t(D') > 0$), and thus that 
\begin{equation}
    e^{-t(D)} - \frac{1}{2} e^{-t(D')-t(D)} \geq 1 - \frac{1}{2} e^{-t(D')} \geq \frac{1}{2} \;.
\end{equation}

In the end, 
\begin{equation}
    \begin{aligned}
        \frac{\Prob \p{\hat{i}(D) = k | (L_{k'})_{k' \neq k}}}{\Prob \p{\hat{i}(D') = k | (L_{k'})_{k' \neq k}}}
        \leq
        e^{t(D') - t(D)} 
        \stackrel{\eqref{eq:jbckvjhbihuqbisubdhfhjqsbdjhfbdhjskq}}{\leq} e^{2 \Delta}
        \;.
    \end{aligned}
\end{equation}

\paragraph{Case 3: $t(D')\leq 0 < t(D)$}

\begin{equation}
    \begin{aligned}
        \frac{\Prob \p{\hat{i}(D) = k | (L_{k'})_{k' \neq k}}}{\Prob \p{\hat{i}(D') = k | (L_{k'})_{k' \neq k}}}
        &\stackrel{\eqref{eq:uoqshdofihquosdhfsq} \& \eqref{eq:uoqshdofihquosdhfsqjndsq}}{=}
        \frac{1 - \frac{1}{2} e^{-t(D)}}{\frac{1}{2} e^{t(D')}}
        =
        e^{t(D) - t(D')} \frac{e^{-t(D)} - \frac{1}{2} e^{-t(D')-t(D)}}{\frac{1}{2}} \\
        &\leq e^{t(D) - t(D')} \frac{e^{-t(D)} - \frac{1}{2} e^{-t(D)}}{\frac{1}{2}} = e^{t(D) - t(D')} \frac{\frac{1}{2} e^{-t(D)}}{\frac{1}{2}} \\
        &\leq e^{t(D) - t(D')} \stackrel{\eqref{eq:jbckvjhbihuqbisubdhfhjqsbdjhfbdhjskq}}{\leq} e^{2 \Delta}\;.
    \end{aligned}
\end{equation}

\paragraph{Case 4: $t(D), t(D') > 0$}

\begin{equation}
    \begin{aligned}
        \frac{\Prob \p{\hat{i}(D) = k | (L_{k'})_{k' \neq k}}}{\Prob \p{\hat{i}(D') = k | (L_{k'})_{k' \neq k}}}
        &\stackrel{\eqref{eq:uoqshdofihquosdhfsqjndsq}}{=}
        \frac{1 - \frac{1}{2} e^{-t(D)}}{1 - \frac{1}{2} e^{-t(D')}} \\
        &\leq \min \left\{ e^{t(D') - t(D)} \underbrace{\frac{e^{-t(D')} - \frac{1}{2} e^{-t(D)-t(D')}}{e^{-t(D)} - \frac{1}{2} e^{-t(D')-t(D)}}}_{\leq 1 \text{ if } t(D') \geq t(D)}, e^{t(D) - t(D')} \underbrace{\frac{e^{-t(D)} - \frac{1}{2} e^{-2 t(D)}}{e^{-t(D')} - \frac{1}{2} e^{-2 t(D')}}}_{\leq 1 \text{ if } t(D) \geq t(D')} \right\} \\
        &\leq e^{|t(D) - t(D')|} \\
        &\stackrel{\eqref{eq:jbckvjhbihuqbisubdhfhjqsbdjhfbdhjskq}}{\leq} e^{2 \Delta} \;.
    \end{aligned}
\end{equation}

Thus, we have shown that $\frac{\Prob \p{\hat{i}(D) = k | (L_{k'})_{k' \neq k}} }{\Prob \p{\hat{i}(D') = k | (L_{k'})_{k' \neq k}} } \leq e^{2 \Delta}$.

Integrating over $(L_{k'})_{k' \neq k}$ yields
\begin{equation}
    \E_{(L_{k'})_{k' \neq k}} \p{\Prob \p{\hat{i}(D) = k | (L_{k'})_{k' \neq k}}}
    \leq e^{2 \Delta} \E_{(L_{k'})_{k' \neq k}} \p{\Prob \p{\hat{i}(D') = k | (L_{k'})_{k' \neq k}}} \;,
\end{equation}
which translates to 
\begin{equation}
    \Prob \p{\hat{i}(D) = k}
    \leq e^{2 \Delta} \Prob \p{\hat{i}(D') = k }\;.
\end{equation}

Finally, since this is true for any $k$, $\hat{i}$ is $\epsilon$-DP when $\Delta \leq \frac{\epsilon}{2}$. This concludes the proof by scaling appropriatetly.

\subsection{Proof of \Cref{lemma:sensitivitysemidual}}
\label{sec:proof_of_sensitivitysemidual}

Let $(X_{1:n}, Y_{1:n}) \sim (X_{1:n}', Y_{1:n}')$ be two neighboring datasets. 
By definition, they contain the same data points except for one individual. 
Suppose first that $Y_{1:n}' = Y_{1:n}'$ and that, for some $i\in [n]$, the datasets $X_{1:n}$ and $X_{1:n}'$ differ in the $i$-th coordinate only.
Then the term $\frac{2 \| f \|_{\infty} }{n}$ is obtained by noting that, by the triangular inequality
    \begin{align*}
        \left| \hat{S}(f | (X_{1:n}, Y_{1:n})) - \hat{S}(f | (X_{1:n}, Y_{1:n})) \right| = \frac{1}{n} \left|f(X_i) - f(X'_i)\right| \leq \frac{2 \| f \|_{\infty} }{n}.
    \end{align*}
    
    In the opposite case, it holds that $X_{1:n} = X_{1:n}'$ and that the datasets $Y_{1:n}$ and $Y_{1:n}'$ differ in one of the coordinates $i \in [n]$. 
     We first prove that $f^*$ is $| \Tilde{\Omega} |$-Lipschitz on $\tilde{\Omega}$.

    Let $y_1, y_2 \in \tilde{\Omega}$, let $x \in \tilde{\Omega}$,
    \begin{equation}
        \begin{aligned}
            \langle x, y_1 \rangle - f(x)
            &=
            \langle x, y_1 - y_2 \rangle + \langle x, y_2 \rangle - f(x) \\
            &\stackrel{\text{Cauchy-Schwarz}}{\leq}
            \| x \| \| y_1 - y_2 \| + \langle x, y_2 \rangle - f(x) \\
            &\stackrel{}{\leq}
            |\tilde{\Omega}| \| y_1 - y_2 \| + \langle x, y_2 \rangle - f(x) \;.
        \end{aligned}
    \end{equation}
    Thus, taking the $\sup$ with respect to $x$ on the left-hand side and on the right-hand side yields
    \begin{equation}
        \begin{aligned}
            f^*(y_1)
            &\stackrel{}{\leq}
            |\tilde{\Omega}| \| y_1 - y_2 \| + f^*(y_2) \;,
        \end{aligned}
    \end{equation}
    and since $y_1$ and $y_2$ play symmetric roles, 
    \begin{equation}
        \begin{aligned}
            f^*(y_2)
            &\stackrel{}{\leq}
            |\tilde{\Omega}| \| y_1 - y_2 \| + f^*(y_1) \;.
        \end{aligned}
    \end{equation}
    Hence, $\hat{S}(f | X_{1:n}, Y_{1:n})$ is $\frac{| \tilde{\Omega} |}{n}$-Lipschitz w.r.t. any of the $Y$s, and therefore, changing the value of any of the $Y$s can only affect the value of $\hat{S}(f | X_{1:n}, Y_{1:n})$ by at most $\frac{2 |\tilde{\Omega} |^2}{n}$ by a triangular inequality.

\section{Proofs of \Cref{sec:upperbound}}

\subsection{Proof of \Cref{lemma:from_semidual_to_infty}}
\label{proof_of_from_semidual_to_infty}

Let $f_1, f_2 \in \mathcal{X}$. 
We observe that $\| f_1^* - f_2^*\|_{\infty} \leq \| f_1 - f_2\|_{\infty}$, since 
    for any $y\in \Tilde{\Omega}$,
     \begin{align*}
        f_1^*(y) - f_2^*(y) &\defeq \sup_x  ~ \Big\{\langle x, y \rangle - f_1(x) \Big\}  +  \inf_{x'} ~  \Big\{-\langle x', y \rangle + f_2(x') \Big\} \\
        &=  \sup_x \inf_{x'} ~ \langle x-x', y \rangle + f_2(x') - f_1(x)  \\
        & \leq \sup_x \inf_{x' = x} ~ \langle x-x', y \rangle + f_2(x') - f_1(x) \\
        &= \sup_x  ~ f_2(x) - f_1(x). 
    \end{align*}    
    Thus,
    \begin{equation}
        \begin{aligned}
            \left| \hat{S}(f_1 | X_{1:n}, Y_{1:n}) - \hat{S}(f_2 | X_{1:n}, Y_{1:n}) \right| 
            &=
            \left| \frac{1}{n} \sum_{i = 1}^n (f_1(X_i) - f_2(X_i)) + \frac{1}{n} \sum_{i = 1}^n (f_1^*(Y_i) - f_2^*(Y_i)) \right| \\
            &\leq \| f_1 - f_2\|_{\infty} + \| f_1^* - f_2^*\|_{\infty}\\
            &\leq 2  \| f_1 - f_2\|_{\infty} \;.
        \end{aligned}
    \end{equation}

\subsection{Proof of \Cref{lemma:covering_cardinality}}
\label{proof_of_covering_cardinality}

By \Cref{lemma:comparison_of_norms}, we have
\begin{equation}
    V_J \cap \mathcal{X}(2M) \subset B_{V_J, \| \cdot\|_{\infty,J}}(0, C M^2)
\end{equation}
for a positive constant $C$, 
where $B_{V_J, \| \cdot\|_{\infty,J}}(0, r)$ refers to the closed ball of $V_J$ with center $0$ and radius $r \geq 0$ for the norm $\| \cdot\|_{\infty,J}$ (the infinite norm of the vector of coefficients in the wavelet basis). 

Since this is a ball in a finite-dimensional space for a distance in infinite norm, it is possible to build a $\delta$-covering of this ball with a $\delta$-grid on the coefficients of the wavelet decomposition, which is of cardinality
\begin{equation}
    C_J \eqdef \p{\left\lceil\frac{2 C M^2}{\delta} \right\rceil}^{\dim (V_J)}.
\end{equation}
Furthermore, because of the bounded support of the father and mother wavelets, $\dim (V_J) \lesssim 2^{Jd}$ (see page 30 in \cite{hutter2021minimax}). 
Thus, 
\begin{equation}
    C_J \leq \p{\left\lceil\frac{2 C M^2}{\delta} \right\rceil}^{C' 2^{Jd}}
\end{equation}
where $C'$ is a constant that does not depend on $J$ and $\delta$.

Let us consider the following construction: for each element of this covering, if there exists a point in $V_J \cap \mathcal{X}(2M)$ at distance not more than $\delta$, we replace the original element by this new point (we choose any point that satisfies this condition in case of non uniqueness). 
Else, we remove this element if no point in $V_J \cap \mathcal{X}(2M)$ satisfies this condition. 
By the triangular inequality, this construction yields a $2 \delta$ covering of $V_J \cap \mathcal{X}(2M)$ for $\| \cdot\|_{\infty,J}$ of cardinality at most $C_J$.

Furthermore, by \Cref{lemma:comparison_of_norms}, a $\frac{\delta}{2^{Jd/2}}$-covering of $V_J \cap \mathcal{X}(2M)$ for $\| \cdot\|_{\infty,J}$ is a $\delta$-covering of $V_J \cap \mathcal{X}(2M)$ for the usual infinite norm $\| \cdot\|_{\infty}$.

Scaling $\delta$ appropriately concludes the proof.

\subsection{Proof of \Cref{theorem:bias_variance_tradeoff}}
\label{proof_of_bias_variance_tradeoff}

We will control
\begin{equation}
    {\hat{S}(\hat{f}_{\text{\normalshape priv}} | X_{1:n}, Y_{1:n}) - \hat{S}(\hat{f}_{J} | X_{1:n}, Y_{1:n})} \;.
\end{equation}

We work conditionally on $X_{1:n}, Y_{1:n}$. 
All the relations below hold almost surely.

Since $C_{J, M}$ is a $\delta$-covering of $V_J \cap \mathcal{X}(2M)$ with respect to $\| \cdot\|_{\infty}$, it is also a $2 \delta$-covering of $V_J \cap \mathcal{X}(2M)$ with respect to $\hat{S}(\cdot | X_{1:n}, Y_{1:n})$ by Lemma~\ref{lemma:from_semidual_to_infty}. 
Thus,
\begin{equation}
    \begin{aligned}
        \hat{S}(\hat{f}_{\text{\normalshape priv}} &| X_{1:n}, Y_{1:n}) - \hat{S}(\hat{f}_{J} | X_{1:n}, Y_{1:n}) \\
        & = \min_{i \in \{1, \dots, \card{C_{J, M}} \}} \left\{\hat{S}(f_i | D) + \frac{32 M^2 \vee 36 d}{n \epsilon} L_i \right\} -
        \hat S(\hat f_J|X_{1:n}, Y_{1:n})\\
        & \leq \min_{i \in \{1, \dots, \card{C_{J, M}} \}} \left\{\hat{S}({f}_{i} | X_{1:n}, Y_{1:n}) - \hat S(\hat f_J|X_{1:n}, Y_{1:n}) \right\} + \max_{i} \left| \frac{32 M^2 \vee 36 d}{n \epsilon} L_i \right|\\
        & \leq 2\min_{i \in \{1, \dots, \card{C_{J, M}} \}} \| f_i - \hat f_J\|_\infty+ \max_{i} \left| \frac{32 M^2 \vee 36 d}{n \epsilon} L_i \right|\\
        &\leq 
        \underbrace{2 \delta + \max_{i} \left| \frac{32 M^2 \vee 36 d}{n \epsilon} L_i \right|}_{\defeq D} \;.
    \end{aligned}
\end{equation}
By integrating over the source of privacy and by \Cref{lemma:maximumlaplace},
\begin{equation}
    \begin{aligned}
        \E_{L_{1:\card{C_{J, M}}}} \p{D}
        &\lesssim 
        \delta + \frac{\ln(\card{C_{J, M}})}{n \epsilon} \\
        &\lesssim 
        \delta + \frac{2^{Jd} \ln \p{\frac{C 2^{J d / 2} }{\delta} + 1}}{n \epsilon} \;,
    \end{aligned}
\end{equation}
where $C\geq 0$ is a constant independent of $J$ and $\delta$. 
Fixing $\delta = \frac{C 2^{J d / 2} }{n \epsilon}$ yields
\begin{equation}
    \begin{aligned}
        \E_{L_{1:\card{C_{J, M}}}} \p{D}
        &\lesssim 
        \frac{2^{Jd} \ln(n \epsilon)}{n \epsilon}  \;.
    \end{aligned}
\end{equation}

By \Cref{lemma:risk_decomposition} and subsection 5.5 in \cite{hutter2021minimax} in order to control the bias, 
\begin{equation}
    \begin{aligned}
        \E &\p{d(\hat{T}_{\text{\normalshape priv}}, T_0)^2} 
        \lesssim 
        \frac{2^{Jd} \ln(n \epsilon)}{n \epsilon}  + \frac{J 2^{J (d-2)}\ln(n)}{n}+ \frac{1}{n} + R^2 2^{-2 J \alpha}
    \end{aligned}
    \end{equation}
for $J$ larger than a suitable constant depending only on the dimension.

\newpage

\section{Proofs of \Cref{sec:lowerbound}}

\subsection{Proof of \Cref{theorem:lowerbound}}
\label{sec:proof_of_lowerbound}

The proof of the lower bounding terms $\frac{1}{n}$ and $n^{- \frac{2 \alpha}{2 \alpha - 2 + d}}$ uses standard packing arguments \cite{tsybakov2003introduction} with a specific packing for the problem at hand. It may be found in \cite{hutter2021minimax}. We only provide the proof for the additional term $(n\epsilon)^{-\frac{2 \alpha}{2 \alpha + d}}$. It is based on packing arguments for differential privacy standardized via couplings in \cite{acharya2018differentially,acharya2021differentially,lalanne2022statistical}. 
We start by building a packing similar to the one from \cite{hutter2021minimax}, which is later analyzed with TV distances instead of the traditional KL divergences in order to highlight the effect of differential privacy.

    \paragraph{Packing construction.}

    Let $m$ be an integer that will be specified later in the proof and let $N = m^d$. 
    We consider the grid $\p{\p{\frac{k_1}{m+1}, \dots, \frac{k_d}{m+1}}}_{1 \leq k_1, \dots, k_d \leq m}$.
This grid has cardinality $N$, therefore we can enumerate its elements as $(p_1,\dots, p_N) \in (\R^d)^N$. 
By construction, we have that 
\begin{equation}
    \forall i, j \in \{1, \dots, N \}, \quad i \neq j \implies \| p_i - p_j\|_{\infty} \geq \frac{1}{m+1} \;.
\end{equation}

Now, we consider a $C^\infty$ ``bump'' function $B$ defined over $\R$ and supported on $[-1, 1]$, taking positive values over $(-1,1)$. 
For such a function, there exists $x_0 \in (-1, 1)$ such that $B(x_0) \neq 0$ and $B'(x_0) \neq 0$. 

For some sufficiently small constant $a>0$ whose value will be fixed later, we define the function 
\begin{equation}
    \psi(x_1, \dots, x_d) = a \prod_{i=1}^d B \p{\frac{x_i}{2}}.
\end{equation}
and for some $h\in(0,1]$ and any $\theta \in \{0, 1 \}^{N}$, we define the potential $ \phi_{\theta}:\tilde{\Omega}\to\R$ as  
\begin{equation}
\label{eq:definitionpacking}
    \phi_{\theta}(\cdot) \eqdef \frac{1}{2} \| \cdot\|^2 + h^{\alpha + 1} \sum_{i = 1}^{N} \theta_i \psi \p{\frac{\cdot - p_i}{h}} \;.
\end{equation}
To ensure that the functions $ \psi \p{\frac{\cdot - p_i}{h}}$ for $i \in \{1,\dots,N\}$ have disjoint supports, we take $h < \frac{1}{2(m+1)} \lesssim \frac{1}{m}$.

First, we show that for any $\theta \in \{0, 1 \}^{N}$, $\nabla \phi_{\theta} \in \mathcal{T}_{\alpha}$:
\begin{itemize}
\item (Bounded transport map) $\forall \theta \in \{0, 1 \}^{N}, \forall x \in \tilde{\Omega},$
    \begin{equation}
    \begin{aligned}
        \nabla \phi_{\theta}(x) = x + h^{\alpha} \sum_{i = 1}^{N} \theta_i \nabla \psi \p{\frac{x - p_i}{h}} \;,
    \end{aligned}
    \end{equation}
    and since the supports are disjoint, only one term in the sum is non-zero, which yields
    \begin{equation}
    \label{eq:packingGradientVSIdentity}
        \forall \theta \in \{0, 1 \}^{N}, \forall x \in \tilde{\Omega}, \quad \| \nabla \phi_{\theta}(x) - x \| \leq h^{\alpha} \|\nabla \psi \|_{\infty}.
    \end{equation}
    Choosing $a$ small enough ensures that $\forall \theta \in \{0, 1 \}^{N}, \forall x \in \tilde{\Omega}, \|T(x)\| \leq M$.

    \item (Strong convexity of the potential) $\forall \theta \in \{0, 1 \}^{N}, \forall x \in \tilde{\Omega},$
    \begin{equation}
    \begin{aligned}
        \nabla^2 \phi_{\theta}(x) = I_d + h^{\alpha - 1} \sum_{i = 1}^{N} \theta_i \nabla^2 \psi \p{\frac{x - p_i}{h}}
    \end{aligned}
    \end{equation}
    and thus, since the supports are disjoint,
    \begin{equation}
        \forall \theta \in \{0, 1 \}^{N}, \forall x \in \tilde{\Omega}, \quad \| \nabla^2 \phi_{\theta}(x) - I_d \|_{\text{op}} \leq h^{\alpha - 1} \sup \|\nabla^2 \psi \|_{\text{op}} \;,
    \end{equation}
    where the right-hand side can be made arbitrarily close to $0$ by taking $a$ small enough (since $h^{\alpha-1}\leq 1$), which ensures that $\forall \theta \in \{0, 1 \}^{N}, \forall x \in \tilde{\Omega}, \frac{1}{M} \preceq \nabla^2 \phi_\theta (x) \preceq M$.

    \item (Stability of the support) Let $\theta \in \{0, 1 \}^{N}$. It follows from \eqref{eq:packingGradientVSIdentity} that, when $h \leq \frac{1}{m+1}$, one can take $a$ small enough to ensure that 
    \begin{equation}
        \forall i, \nabla \phi_{\theta} \p{B_{\infty}\p{p_i, \frac{h}{2}}} \subset B_{\infty}\p{p_i, h} \;.
    \end{equation}
    Furthermore, since outside the balls of the form $\p{B_{\infty}\p{p_i, \frac{h}{2}}}$, $\nabla \phi_{\theta}$ is the identity, we have that $\nabla \phi_{\theta}(\Omega) \subset \Omega$ and that 
    \begin{equation}
        \forall i, \nabla \phi_{\theta} \p{B_{\infty}\p{p_i, h}} \subset B_{\infty}\p{p_i, h} \;.
    \end{equation}
    
    We now give additional details on the properties of $\nabla \phi_{\theta}$. 
    We have proved above that $\phi_\theta$ is strongly convex for any $\theta \in \{0,1\}^N$. 
    It follows that $\nabla \phi_{\theta}$ is injective and that, for any $x \in \tilde{\Omega}$, the Hessian $\nabla^2 \phi_{\theta} (x) \succ 0$ is invertible. 
    Since $\nabla \phi_{\theta}$ is $\mathcal{C}^{\infty}$ over $\tilde{\Omega}$, the global inversion theorem states that $\nabla \phi_{\theta}$ is a $\mathcal{C}^{\infty}$ diffeomorphism from $\tilde{\Omega}$ to $\nabla \phi_{\theta}(\tilde{\Omega})$.

    We may also see that the inclusion $\nabla \phi_{\theta}(\Omega) \subset \Omega$ is in fact an equality using arguments borrowed from basic algebraic topology. 
    First, $\nabla \phi_{\theta}$ is the identity over $\partial \Omega$. 
    Suppose for the sake of contradiction that $\Omega \setminus \nabla \phi_{\theta}(\Omega) \neq \emptyset$ and let $x \in \Omega \setminus \nabla \phi_{\theta}(\Omega)$ ($x\notin \partial \Omega$ since $\nabla \phi_{\theta}$ is the identity over $\partial \Omega$). 
    Then, letting $r_x$ be a retraction of $\Omega \setminus \{ x \}$ on $\partial \Omega$ (which exists), we obtain that $r_x \circ \nabla \phi_{\theta}$ is a retraction of $\Omega$ (which is homeomorphic to $B(0, 1)$) to $\partial \Omega$ (which is homeomorphic to $ \partial B(0, 1)$), which is impossible by Brouwer's theorem (see Theorem 7.6.2 in \cite{randalwilliams2024algebraictopology}).
        
    As a consequence, 
    \begin{equation}
        \forall \theta \in \{0, 1 \}^{N}, \quad \nabla \phi_{\theta}(\Omega) = \Omega \;.
    \end{equation}
    By injectivity of the functions $\phi_\theta$'s, this immediately yields
    \begin{equation}
        \forall \theta \in \{0, 1 \}^{N}, \quad (\nabla \phi_{\theta})^{-1} \p{\Omega} = \Omega \;.
    \end{equation}
    Finally, the same arguments applied to the supports of the functions $\psi \p{\frac{\cdot - p_i}{h}}$ 
 ensure that, for a small enough $a>0$, we have
    \begin{equation}
        \forall \theta \in \{0, 1 \}^{N}, \forall i, \quad \nabla \phi_{\theta} \p{B_{\infty}\p{p_i, h}} = B_{\infty}\p{p_i, h} \;.
    \end{equation}
    By injectivity of the functions $\phi_\theta$'s, we similarly obtain
    \begin{equation}
        \forall \theta \in \{0, 1 \}^{N}, \forall i, \quad (\nabla \phi_{\theta})^{-1} \p{B_{\infty}\p{p_i, h}} = B_{\infty}\p{p_i, h} \;.
    \end{equation}

    \item (Smoothness) Let $\theta \in \{0, 1 \}^{N}$. Using multi-index notation (see \cite{hutter2021minimax}),
    \begin{equation}
        \| \nabla \phi_{\theta} \|_{C^{\alpha}(\tilde{\Omega})}
        = 
        \sum_{i = 1}^d 
        \p{
        \sum_{|b| \leq \alpha} \underbrace{\| \partial^b \p{\nabla \phi_{\theta}}_i \|_{\infty}}_{\defeq A_{i, b}}
        + 
        \sum_{|b| = \floor{\alpha}}
        \underbrace{
        \sup_{x \neq y, x, y \in \tilde{\Omega}}
        \frac{|\partial^b \p{\nabla \phi_{\theta}}_i(x) - \partial^b \p{\nabla \phi_{\theta}}_i(y) |}{|x - y|^{\alpha - \floor{\alpha}}}}_{\defeq B_{i, b}}
        } \;.
    \end{equation}

    Furthermore, 
    \begin{equation}
        A_{i, b} = \left\|\partial^b \text{ Coord.}_i + h^{\alpha - |b|} \sum_{j = 1}^{N} \theta_j \partial^b (\nabla \psi)_i \p{\frac{\cdot - p_j}{h}}\right\|_{\infty}
    \end{equation}
    where $\text{ Coord.}_i$ is the projection along the $i^{\text{th}}$ canonical vector.
    Thus, $A_{i, b}$ can be made as close as we want to $\left\|\partial^b \text{ Coord.}_i\right\|_{\infty}$ granted that the constant $a$ is taken small enough.

    Finally, 
    \begin{equation}
        \begin{aligned}
            &B_{i, b} = \sup_{x \neq y, x, y \in \tilde{\Omega}}\\
            &\frac{\left|\partial^b \text{ Coord.}_i(x) + h^{\alpha - |b|} \sum_{j = 1}^{N} \theta_j \partial^b (\nabla \psi)_i \p{\frac{x - p_j}{h}} - \partial^b \text{ Coord.}_i(y) + h^{\alpha - |b|} \sum_{j = 1}^{N} \theta_j \partial^b (\nabla \psi)_i \p{\frac{y - p_j}{h}} \right|}{|x - y|^{\alpha - \floor{\alpha}}} \\
        &= \sup_{x \neq y, x, y \in \tilde{\Omega}}
        \frac{\left|h^{\alpha - |b|} \sum_{j = 1}^{N} \theta_j \partial^b (\nabla \psi)_i \p{\frac{x - p_j}{h}} - h^{\alpha - |b|} \sum_{j = 1}^{N} \theta_j \partial^b (\nabla \psi)_i \p{\frac{y - p_j}{h}} \right|}{|x - y|^{\alpha - \floor{\alpha}}} \\
        &\leq 
        h^{\alpha - |b|}
        \sup_{x \neq y, x, y \in \tilde{\Omega}}
        \frac{ \sum_{j = 1}^{N} \left|\partial^b (\nabla \psi)_i \p{\frac{x - p_j}{h}} -   \partial^b (\nabla \psi)_i \p{\frac{y - p_j}{h}} \right|}{|x - y|^{\alpha - \floor{\alpha}}} \\
        &\stackrel{\text{ Disjoint Supports \& Continuity}}{\leq}
        h^{\alpha - |b|}
         \max_i \sup_{x \neq y, x, y \in \tilde{\Omega}}
        \frac{  \left|\partial^b (\nabla \psi)_i \p{\frac{x - p_j}{h}} -   \partial^b (\nabla \psi)_i \p{\frac{y - p_j}{h}} \right|}{|x - y|^{\alpha - \floor{\alpha}}} \\
        &\leq
        h^{\alpha - |b|}
         \max_i \sup_{x \neq y, x, y \in \R^d}
        \frac{  \left|\partial^b (\nabla \psi)_i \p{\frac{x - p_j}{h}} -   \partial^b (\nabla \psi)_i \p{\frac{y - p_j}{h}} \right|}{|x - y|^{\alpha - \floor{\alpha}}} \\
        &= 
        h^{\alpha - |b|}
         \max_i h^{-(\alpha - \floor{\alpha})} \sup_{x \neq y, x, y \in \R^d}
        \frac{  \left|\partial^b (\nabla \psi)_i \p{x} -   \partial^b (\nabla \psi)_i \p{y} \right|}{|x - y|^{\alpha - \floor{\alpha}}} \\
        &= 
         \max_i\sup_{x \neq y, x, y \in \R^d}
        \frac{  \left|\partial^b (\nabla \psi)_i \p{x} -   \partial^b (\nabla \psi)_i \p{y} \right|}{|x - y|^{\alpha - \floor{\alpha}}} \;.
        \end{aligned}
    \end{equation}
    Hence, $B_{i, b}$ can be made as small as we want to $0$ granted that the constant $a$ is taken small enough.

    All in all, if the constant $a$ is taken small enough, $\sup_{\theta} \| \nabla \phi_{\theta} \|_{C^{\alpha}(\tilde{\Omega})}$ can be made as close as we want to $\| \text{ Identity } \|_{C^{\alpha}(\tilde{\Omega})}$, which guarantees that $\sup_{\theta} \| \nabla \phi_{\theta} \|_{C^{\alpha}(\tilde{\Omega})} \leq R$ if $a$ is small enough.
\end{itemize}

Similarly as in \cite{hutter2021minimax}, we consider the family of statistics $(S_{\theta} \eqdef P^{\otimes n} \otimes Q_{\theta}^{\otimes n})_{\theta \in \{0, 1 \}^{N}}$ on the problem where
\begin{equation}
    P = \text{Unif}([0, 1]^d) \;,
\end{equation}
and 
\begin{equation}
    \forall \theta \in \{0, 1 \}^{N}, \quad 
    Q_{\theta} = {P}_{\# \nabla \phi_{\theta}} \;.
\end{equation}

It is thus possible to simplify the lower-bounding problem as
\begin{equation}
\label{eq:familytopacking}
\inf_{\hat{T}} \sup_{P \in \mathcal{M}, T_0 \in \mathcal{T}_{\alpha}}
    \E\p{\int \| T_0 - \hat{T}  \|^2 dP}
        \geq
        \inf_{\hat{T}} \sup_{\theta \in \{0, 1 \}^{N}} \E_{S_{\theta}} \p{\int \| \nabla \phi_{\theta} - \hat{T}  \|^2 dP}
    \end{equation}
where the supremum is now taken over a finite family only.

We verify that this family of statistics forms a packing 
for the pseudo metric defined below (with some overlap in the notation)
\begin{equation}
    d(S_{\theta_1}, S_{\theta_2})^2 \eqdef d(\theta_1, \theta_2)^2
        \eqdef \int \| \nabla \phi_{\theta_1} - \nabla \phi_{\theta_2}  \|^2 dP
\end{equation}

\begin{lemma}
\label{lemma:packingdistance}
    For any $\theta_1, \theta_2 \in \{0, 1 \}^{N}$, 
    \begin{equation}
        d(S_{\theta_1}, S_{\theta_2})^2
        \gtrsim \ham{\theta_1}{\theta_2} h^{2 \alpha + d} \;.
    \end{equation}
\end{lemma}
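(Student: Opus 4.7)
The plan is to unfold the definition of $\nabla\phi_\theta$ from \eqref{eq:definitionpacking} and exploit the disjoint-supports property that was already used throughout the construction. Explicitly, differentiating \eqref{eq:definitionpacking} yields
\begin{equation*}
    \nabla\phi_{\theta_1}(x) - \nabla\phi_{\theta_2}(x) = h^{\alpha} \sum_{i=1}^{N} (\theta_{1,i} - \theta_{2,i})\, \nabla \psi\!\p{\tfrac{x - p_i}{h}}.
\end{equation*}
Since the functions $\psi((\cdot - p_i)/h)$ have pairwise disjoint supports (ensured by $h < 1/(2(m+1))$), the same holds for their gradients. Consequently, the cross-terms vanish when one squares the right-hand side and the squared norm collapses to a single sum.

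Next, integrating against $P = \mathrm{Unif}([0,1]^d)$, and noticing that the supports $p_i + [-2h, 2h]^d$ are contained in $\Omega = [0,1]^d$ (again by $h < 1/(2(m+1))$ together with $p_i \in [1/(m+1), m/(m+1)]^d$), one obtains
\begin{equation*}
    d(S_{\theta_1}, S_{\theta_2})^2 = h^{2\alpha} \sum_{i=1}^N (\theta_{1,i}-\theta_{2,i})^2 \int_{\R^d} \left\|\nabla\psi\!\p{\tfrac{x-p_i}{h}}\right\|^2 dx.
\end{equation*}
A change of variables $u = (x-p_i)/h$ converts each integral into $h^d \int \|\nabla\psi(u)\|^2 du$, so that
\begin{equation*}
    d(S_{\theta_1}, S_{\theta_2})^2 = h^{2\alpha+d} \, \ham{\theta_1}{\theta_2} \int_{\R^d} \|\nabla\psi(u)\|^2 du,
\end{equation*}
using $(\theta_{1,i}-\theta_{2,i})^2 = \mathds{1}\{\theta_{1,i}\neq \theta_{2,i}\}$.

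To conclude, I need to check that $c \eqdef \int \|\nabla\psi\|^2 du$ is a strictly positive constant (independent of all parameters other than the fixed bump $B$ and $a$). This is where the choice of $x_0 \in (-1,1)$ with $B(x_0) \neq 0$ and $B'(x_0) \neq 0$ from the construction becomes essential: computing a partial derivative of $\psi(u_1,\dots,u_d) = a\prod_i B(u_i/2)$ shows that $\partial_1 \psi(2x_0, \dots, 2x_0) = \tfrac{a}{2} B'(x_0) B(x_0)^{d-1} \neq 0$, hence $\|\nabla\psi\|^2$ is continuous and strictly positive on a neighborhood, giving $c>0$. The mild obstacle is making sure this constant depends only on fixed quantities (namely $B$ and the finally chosen $a$), so that it can be absorbed in the $\gtrsim$ notation; once this is done, the desired bound $d(S_{\theta_1}, S_{\theta_2})^2 \gtrsim \ham{\theta_1}{\theta_2} h^{2\alpha+d}$ follows immediately.
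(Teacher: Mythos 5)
Your proposal is correct and follows essentially the same route as the paper: expand $\nabla\phi_{\theta_1}-\nabla\phi_{\theta_2}$, use the disjoint supports to kill cross-terms, change variables to extract $h^{2\alpha+d}\,\ham{\theta_1}{\theta_2}$, and absorb the constant $\int\|\nabla\psi\|^2>0$. Your explicit verification of that strict positivity via $\partial_1\psi(2x_0,\dots,2x_0)\neq 0$ is merely a spelled-out version of what the paper asserts from its choice of the bump $B$.
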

\begin{proof}
    Let $\theta_1, \theta_2\in \{0, 1 \}^{N}$, then
\begin{equation}
\label{eq:distkernelsaw}
\begin{aligned}
    \int_{[0, 1]^d} \| \nabla \phi_{\theta_1} - \nabla \phi_{\theta_2}  \|^2 
    &\stackrel{\text{Disjoint supports}}{=}
    \sum_{i=1}^{N} \int_{[0, 1]^d}
    \left\|(\theta_1^i-\theta_2^i) h^{\alpha } \nabla \psi\p{\frac{x-p_i }{h}} \right\|^2 dx \\
    &\stackrel{\text{Change of variables}}{=}
    \ham{\theta_1}{\theta_2} h^{2 \alpha + d} \underbrace{\int \|\nabla \psi\|^2}_{> 0} \\
    &\gtrsim \ham{\theta_1}{\theta_2} h^{2 \alpha + d} \;.
\end{aligned}
\end{equation}
\end{proof}
From line $1$ to line $2$, we used that $|\theta_1^i-\theta_2^i|$ is equal to $1$ only $\ham{\theta_1}{\theta_2}$ times, and is it equal to $0$ in the other cases. The change of variable $u = \frac{x-p_i }{h}$ in each of the remaining terms yields the term $h^d$.

    \paragraph{Upper bounding the TV distances.}

    To proceed with the lower-bound argument, it remains to lower-bound the testing difficulty between the hypotheses of this packing. Techniques for bounding the testing difficulties usually require bounding the KL-divergences between the statistical models of the packing and a reference measure. 
    This is the approach proposed in \cite{hutter2021minimax}, which leads to a lower bound of the order of $\frac{1}{n} \vee n^{- \frac{2 \alpha}{2 \alpha - 2 + d}}$. 
    However, under differential privacy, the test difficulty is characterized by the TV distances between the \emph{untensorized} marginals of the statistical models. 
    This part of the proof is dedicated to controlling these terms.

    \begin{lemma}
    \label{lemma:tvpacking}
    For any $\theta_1, \theta_2 \in \{0, 1 \}^{N}$, it holds that
    \begin{equation}
        \tv{Q_{\theta_1}}{Q_{\theta_2}}
        \lesssim \ham{\theta_1}{\theta_2} h^{\alpha - 1 + d} \;.
    \end{equation}
\end{lemma}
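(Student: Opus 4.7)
The plan is to compute the densities of $Q_\theta$ explicitly, reduce to the case of a single coordinate flip by the triangle inequality, and then use the first-order determinant expansion from \Cref{lemma:linearizationdeterminant}. Since the packing construction shows that $\nabla \phi_\theta$ is a $\mathcal{C}^\infty$ diffeomorphism from $\Omega$ onto $\Omega$, the change-of-variables formula gives that $Q_\theta = P_{\#\nabla\phi_\theta}$ has density $\rho_\theta(y) = 1/|\det \nabla^2 \phi_\theta((\nabla\phi_\theta)^{-1}(y))|$ on $\Omega$. Because the bumps $\psi((\cdot - p_i)/h)$ have pairwise disjoint supports contained in $B_\infty(p_i, h)$, and each $\nabla \phi_\theta$ was shown in the packing construction to preserve both $\Omega$ and every $B_\infty(p_i, h)$, flipping one coordinate of $\theta$ only changes $\rho_\theta$ inside the corresponding box. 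I then telescope through a sequence of intermediate configurations $\theta_1 = \eta_0, \eta_1, \dots, \eta_{\ham{\theta_1}{\theta_2}} = \theta_2$ differing in exactly one coordinate at each step and apply the triangle inequality to reduce to the case $\ham{\theta_1}{\theta_2} = 1$.

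In this single-flip case, say $\theta_1$ and $\theta_2$ differ at index $i$ with $\theta_1^i = 1$ and $\theta_2^i = 0$, so $\rho_{\theta_2} \equiv 1$ on $B_\infty(p_i, h)$ while $\rho_{\theta_1}$ has the formula above there; the two densities coincide outside $B_\infty(p_i, h)$. Hence
\begin{equation}
\tv{Q_{\theta_1}}{Q_{\theta_2}} = \tfrac{1}{2}\int_{B_\infty(p_i, h)} |\rho_{\theta_1}(y) - 1|\,dy = \tfrac{1}{2}\int_{B_\infty(p_i, h)} \bigl| |\det \nabla^2 \phi_{\theta_1}(x)| - 1 \bigr|\, dx,
\end{equation}
the second equality following from the change of variable $y = \nabla\phi_{\theta_1}(x)$, which sends $B_\infty(p_i, h)$ onto itself and whose Jacobian determinant cancels the $1/|\det \nabla^2 \phi_{\theta_1}|$ in $\rho_{\theta_1}$. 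On $B_\infty(p_i, h)$ one has $\nabla^2 \phi_{\theta_1}(x) = I_d + h^{\alpha-1}\nabla^2\psi((x-p_i)/h)$, and taking the constant $a$ in the construction of $\psi$ small enough ensures $h^{\alpha-1}\nabla^2\psi(u) \in O$ uniformly in $u$, where $O$ is the neighborhood from \Cref{lemma:linearizationdeterminant}. That lemma then yields $||\det \nabla^2 \phi_{\theta_1}(x)| - 1| \lesssim h^{\alpha-1}$ pointwise, and rescaling $u = (x-p_i)/h$ extracts a factor $h^d$ from the integral, giving $\tv{Q_{\theta_1}}{Q_{\theta_2}} \lesssim h^{\alpha - 1 + d}$. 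Summing over the $\ham{\theta_1}{\theta_2}$ steps concludes.

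The main subtlety is the change-of-variable argument: comparing $\rho_{\theta_1}$ with the constant $1$ on $B_\infty(p_i,h)$ becomes, after pullback through the diffeomorphism, a question about how much the Jacobian determinant of $\nabla \phi_{\theta_1}$ deviates from $1$ against \emph{Lebesgue} measure. This is a first-order effect in the perturbation amplitude $h^{\alpha-1}$, which is why the TV distance scales like $h^{\alpha - 1 + d}$, in contrast with the $L^2(P)$-type distance scaling $h^{2\alpha + d}$ from \Cref{lemma:packingdistance}. This discrepancy, together with the coupling-based machinery of \cite{acharya2018differentially,lalanne2022statistical}, is precisely what produces the $(n\epsilon)^{-2\alpha/(\alpha-1+d)}$ term in the final lower bound.
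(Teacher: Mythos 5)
Your proof is correct and follows essentially the same route as the paper's: explicit densities via the change-of-variables formula, localization to the boxes $B_\infty(p_i,h)$ using their stability under $\nabla\phi_\theta$, the determinant linearization of \Cref{lemma:linearizationdeterminant}, and a rescaling to extract the factor $h^d$. The only cosmetic differences are that you telescope over single coordinate flips via the triangle inequality (the paper decomposes the TV integral directly over the boxes where $\theta_1$ and $\theta_2$ differ) and that your change of variables cancels the Jacobian so you bound $\left| \det\nabla^2\phi_{\theta}(x) - 1 \right|$ directly, avoiding the paper's separate bound $|\det\nabla^2\phi_{\theta}|\leq 2$.
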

\begin{proof}
By the change of variables formula, for any $\theta \in \{0, 1 \}^{N}$, the density of $Q_{\theta}$  with respect to $P$ is
\begin{equation}
    \frac{d Q_{\theta}}{dP}(y) = \frac{\Ind_{[0, 1]^d} ((\nabla \phi_{\theta})^{-1}(y))}{\det(\nabla^2 \phi_{\theta}((\nabla \phi_{\theta})^{-1}(y)))}
\end{equation}
for $P$-almost all $y$. 
Let $\theta_1, \theta_2\in \{0, 1 \}^{N}$,
\begin{equation}
    \begin{aligned}
        &\tv{Q_{\theta_1}}{Q_{\theta_2}}
        = 
        \frac{1}{2} \int_{(-1,2)^d} \left| \frac{d Q_{\theta_1}}{dP}(y) - \frac{d Q_{\theta_2}}{dP}(y) \right| dP(y) \\
        &\quad\quad= 
        \frac{1}{2} \int_{[0, 1]^d} \left| \frac{\Ind_{[0, 1]^d} ((\nabla \phi_{\theta_1})^{-1}(y))}{\det(\nabla^2 \phi_{\theta_1}((\nabla \phi_{\theta_1})^{-1}(y)))} - \frac{\Ind_{[0, 1]^d} ((\nabla \phi_{\theta_2})^{-1}(y))}{\det(\nabla^2 \phi_{\theta_2}((\nabla \phi_{\theta_2})^{-1}(y)))} \right| dy \;,
    \end{aligned}
\end{equation}
and by stability of $[0, 1]^d$ and of its complement by either $\nabla \phi_{\theta_1}$ or $\nabla \phi_{\theta_2}$, this expression can be further simplified as 
\begin{equation}
    \begin{aligned}
        &\tv{Q_{\theta_1}}{Q_{\theta_2}} 
        = \frac{1}{2} \int \displaylimits_{[0, 1]^d} \left| \frac{1}{\det(\nabla^2 \phi_{\theta_1}((\nabla \phi_{\theta_1})^{-1}(y)))} - \frac{1}{\det(\nabla^2 \phi_{\theta_2}((\nabla \phi_{\theta_2})^{-1}(y)))} \right| dy \;.
    \end{aligned}
\end{equation}
Then, since both $\nabla \phi_{\theta_1}$ and $\nabla \phi_{\theta_2}$ are the identity outside balls of the form $B_{\infty}(p_i, h)$, and since these balls and their complements are stable by $\nabla \phi_{\theta_1}$ and by $\nabla \phi_{\theta_2}$, this can be rewritten as
\begin{equation}
    \begin{aligned}
        &\tv{Q_{\theta_1}}{Q_{\theta_2}} 
        = \frac{1}{2} \sum_{i=1}^{N} \int \displaylimits_{B_{\infty}(p_i, h)} \left| \frac{1}{\det(\nabla^2 \phi_{\theta_1}((\nabla \phi_{\theta_1})^{-1}(y)))} - \frac{1}{\det(\nabla^2 \phi_{\theta_2}((\nabla \phi_{\theta_2})^{-1}(y)))} \right| dy
        \;.
    \end{aligned}
\end{equation}
This expression finally further simplifies as
\begin{equation}
\label{eq:ojhkfjqhsdlkjblkjqsdg}
    \begin{aligned}
        &\tv{Q_{\theta_1}}{Q_{\theta_2}} 
        = \\
        &\quad\quad \frac{1}{2} \sum_{i=1}^{N} \Ind_{\theta_1^{(i)} \neq \theta_2^{(i)}} \underbrace{\int \displaylimits_{B_{\infty}(p_i, h)} \left| \frac{1}{\det(\nabla^2 \phi_{\theta_1}((\nabla \phi_{\theta_1})^{-1}(y)))} - \frac{1}{\det(\nabla^2 \phi_{\theta_2}((\nabla \phi_{\theta_2})^{-1}(y)))} \right| dy}_{\eqdef \Delta_i}
        \;.
    \end{aligned}
\end{equation}
We treat the case $\theta_1^{(i)}=0$, $\theta_2^{(i)}=1$, the other case being treated similarly.
In this case, 
\begin{equation}
    \begin{aligned}
        \Delta_i &= \int_{B_{\infty}(p_i, h)} \left| 1 - \frac{1}{\det(\nabla^2 \phi_{\theta_2}((\nabla \phi_{\theta_2})^{-1}(y)))} \right| dy \\
        &= \int_{B_{\infty}(p_i, h)} \left| 1 - \frac{1}{\det(\nabla^2 \phi_{\theta_2}(x))} \right| |\det(\nabla^2 \phi_{\theta_2}(x))| dx \;,
    \end{aligned}
\end{equation}
   where the last line comes from a change of variables and from the stability of $B_{\infty}(p_i, h)$ and of its complement.

   For any $x \in B_{\infty}(p_i, h)$,
   \begin{equation}
       \nabla^2 \phi_{\theta_2}(x) = I_d + h^{\alpha-1} \nabla^2 \psi \p{\frac{x-p_i}{h}} \;.
   \end{equation}
   By \Cref{lemma:linearizationdeterminant}, we see that if $a$ is chosen small enough, then we have $|\det(\nabla^2 \phi_{\theta_2}(\cdot))| \leq 2$ uniformly over $B_{\infty}(p_i, h)$.
   Finally, and again by \Cref{lemma:linearizationdeterminant}, if $a$ is chosen small enough, $\forall x \in B_{\infty}(p_i, h)$,
   \begin{equation}
       \left| 1 - \frac{1}{\det(\nabla^2 \phi_{\theta_2}(x))} \right| \leq h^{\alpha - 1} \p{ \left| \tr \p{\nabla^2\psi \p{\frac{x-p_i}{h}}} \right| + \left\| \nabla^2\psi \p{\frac{x-p_i}{h}} \right\|} \;,
   \end{equation}
   which entails, by a change of variables,
   \begin{equation}
       \Delta_i \leq \frac{h^{\alpha - 1 + d}}{2} \int \p{ \left| \tr \p{\nabla^2 \psi} \right| + \left\| \nabla^2\psi  \right\|} \lesssim h^{\alpha - 1 + d} \;.
   \end{equation}
   Plugging this result back into \eqref{eq:ojhkfjqhsdlkjblkjqsdg} yields
   \begin{equation}
       \begin{aligned}
        \tv{Q_{\theta_1}}{Q_{\theta_2}} 
        &\lesssim h^{\alpha - 1 + d} \sum_{i=1}^{N} \Ind_{\theta_1^{(i)} \neq \theta_2^{(i)}} \\
        &= \ham{\theta_1}{\theta_2}  h^{\alpha - 1 + d} \;.
    \end{aligned}
   \end{equation}
\end{proof}

    \paragraph{Private distributional tests and private Assouad method.}

    \begin{lemma}[Assouad's Lemma]
\label{lemma:Assouad}
    Assume that there exists $\tau > 0$ such that for any $\theta_1, \theta_2 \in \{0, 1 \}^{N}$, 
    \begin{equation}
        d(S_{\theta_1}, S_{\theta_2})^2
        \gtrsim \ham{\theta_1}{\theta_2} \tau.
    \end{equation}
    Let $\hat{T}$ be any estimator and define $\hat{\theta} = \argmin_{\theta \in \{0,1\}^N} d(\hat T, \nabla \phi_\theta)$. 
    Then it holds that
    \begin{equation}
    \label{jhbkqjshdf}
        \sup_{\theta \in \{0, 1 \}^{N}} \E_{S_{\theta}} \p{\int \| \nabla \phi_{\theta} - \hat{T}  \|^2 dP}
        \gtrsim \tau \sum_{i=1}^{N} \p{\Prob_{{{\theta_{-i}}}} (\hat{\theta}^i \neq 0) + \Prob_{{{\theta_{+i}}}} (\hat{\theta}^i \neq 1)},
    \end{equation}
    where $\Prob_{{{\theta_{+i}}}}$ and $\Prob_{{{\theta_{-i}}}}$ are the mixture distributions
    \begin{equation}
        \Prob_{{{\theta_{+i}}}} \eqdef \frac{1}{2^{N-1}} \sum_{\theta: \theta^{(i)} = 1} S_{\theta}, \quad \text{ and } \quad \Prob_{{{\theta_{-i}}}} \eqdef \frac{1}{2^{N-1}} \sum_{\theta: \theta^{(i)} = 0} S_{\theta}\;.
    \end{equation}
    Note that in \eqref{jhbkqjshdf} there is a second layer or randomness that is implicit, and that is w.r.t. the estimator itself (for privacy for instance).
\end{lemma}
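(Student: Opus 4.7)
The plan is to run the classical Assouad reduction: replace the supremum by an average over the packing, use the triangle inequality to connect the loss of $\hat T$ to the ``coordinate-wise testing'' performance of $\hat\theta$, and finally reorganize the sum into the two mixture-probability terms on the right-hand side of the target inequality.

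First, I would lower-bound the supremum by the uniform average:
\begin{equation*}
\sup_{\theta \in \{0,1\}^N} \E_{S_\theta} \p{\int \|\nabla \phi_\theta - \hat T\|^2 dP}
\geq \frac{1}{2^N} \sum_{\theta \in \{0,1\}^N} \E_{S_\theta} \p{d(\hat T, \nabla \phi_\theta)^2}.
\end{equation*}
Next, I would exploit the definition $\hat \theta \in \argmin_{\theta'} d(\hat T, \nabla \phi_{\theta'})$: for every $\theta$, by the triangle inequality of the $L^2(P)$ pseudo-metric and the minimality property of $\hat \theta$,
\begin{equation*}
d(\nabla \phi_{\hat\theta}, \nabla \phi_\theta)
\leq d(\hat T, \nabla \phi_{\hat\theta}) + d(\hat T, \nabla \phi_\theta)
\leq 2\, d(\hat T, \nabla \phi_\theta).
\end{equation*}
Squaring and invoking the hypothesis $d(S_{\theta_1},S_{\theta_2})^2 \gtrsim \ham{\theta_1}{\theta_2}\,\tau$ applied to $\theta_1 = \hat\theta$, $\theta_2 = \theta$, I get
\begin{equation*}
d(\hat T, \nabla \phi_\theta)^2 \gtrsim \tau \cdot \ham{\hat\theta}{\theta}
= \tau \sum_{i=1}^N \Ind_{\hat\theta^i \neq \theta^i}.
\end{equation*}

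Now I would insert this pointwise bound under the expectation and rearrange the double sum. Exchanging averaging over $\theta$ with summation over coordinates $i$:
\begin{equation*}
\frac{1}{2^N} \sum_\theta \E_{S_\theta}\!\p{\sum_{i=1}^N \Ind_{\hat\theta^i \neq \theta^i}}
= \sum_{i=1}^N \frac{1}{2^N} \sum_\theta \Prob_{S_\theta}(\hat\theta^i \neq \theta^i),
\end{equation*}
where $\Prob_{S_\theta}$ now includes any internal randomness of the estimator. Splitting the inner sum according to whether $\theta^i = 0$ or $\theta^i = 1$ and recognizing the mixture distributions $\Prob_{\theta_{-i}}$ and $\Prob_{\theta_{+i}}$ gives
\begin{equation*}
\frac{1}{2^N} \sum_\theta \Prob_{S_\theta}(\hat\theta^i \neq \theta^i)
= \frac{1}{2}\p{\Prob_{\theta_{-i}}(\hat\theta^i \neq 0) + \Prob_{\theta_{+i}}(\hat\theta^i \neq 1)},
\end{equation*}
and combining the displays yields the claim up to an absolute multiplicative constant absorbed by $\gtrsim$.

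The argument is essentially routine; the only subtle point is that $d$ is only a pseudo-metric on transport maps (two different $\theta$'s could in principle give the same $L^2(P)$ value), but this is harmless because the packing construction of \Cref{lemma:packingdistance} already guarantees genuine separation $d(\nabla\phi_{\theta_1}, \nabla\phi_{\theta_2})^2 \gtrsim \tau \ham{\theta_1}{\theta_2}$, which is all that is used. The key quantitative input is thus the distance-to-Hamming lower bound, and the reader should note that this step is what transfers the $L^2$ risk on maps into a coordinate-wise testing problem that can subsequently be analyzed via TV-based coupling arguments adapted to differential privacy.
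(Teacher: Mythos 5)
Your proof is correct and follows essentially the same route as the paper, which simply defers to the standard differentially private Assouad reduction of Acharya et al. (2021): average over the hypercube, use the minimum-distance estimator $\hat\theta$ with the triangle inequality and the separation hypothesis, then regroup the sum into the mixture probabilities $\Prob_{\theta_{-i}}$ and $\Prob_{\theta_{+i}}$. Your write-up just makes explicit the argument the paper cites, with the constants $1/2$ and $1/4$ correctly absorbed into $\gtrsim$.
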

\begin{proof}
    The proof can be directly adapted from \cite{acharya2021differentially} by substituting the notation with that used in the present paper.
\end{proof}

\begin{lemma}
\label{lemma:lecamassouad}
    If $\hat{T}$ satisfies $\epsilon$-DP, then for any $i$,
    \begin{equation}
    \begin{aligned}
    &\Prob_{{{\theta_{-i}}}} (\hat{\theta}^i \neq 0) + \Prob_{{{\theta_{+i}}}} (\hat{\theta}^i \neq 1)
    \geq \\
    &\quad\quad
    \frac{1}{2^{N-1}}\sum_{\theta^1, \dots, \theta^{i-1}, \theta^{i+1} \dots, \theta^{N} \in \{0, 1 \}} e^{-n \epsilon \tv{Q_{(\theta^1, \dots, \theta^{i-1}, 0, \theta^{i+1} \dots, \theta^{N})}}{Q_{(\theta^1, \dots, \theta^{i-1}, 1, \theta^{i+1} \dots, \theta^{N})}}} \;,
    \end{aligned}
\end{equation}
where $\hat{\theta}$ is built from $\hat{T}$ as in \Cref{lemma:Assouad}.
\end{lemma}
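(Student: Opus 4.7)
The approach will combine a coordinatewise coupling of the product measures $Q_0^{\otimes n}$ and $Q_1^{\otimes n}$ with the group privacy property of $\epsilon$-differential privacy, closed by Jensen's inequality. This is the private Le Cam / Assouad strategy of \cite{acharya2021differentially,lalanne2022statistical} adapted to the packing at hand. Since $\Prob_{\theta_{-i}}$ and $\Prob_{\theta_{+i}}$ are averages of the $S_\theta$'s over the $N-1$ coordinates other than $i$, it suffices to fix those coordinates, abbreviate $Q_0 \eqdef Q_{(\theta_{-i},0)}$ and $Q_1 \eqdef Q_{(\theta_{-i},1)}$, and establish the conditional bound
\[
S_{(\theta_{-i},0)}(\hat{\theta}^i\ne 0) + S_{(\theta_{-i},1)}(\hat{\theta}^i\ne 1) \geq e^{-n\epsilon \tv{Q_0}{Q_1}},
\]
after which averaging over the $2^{N-1}$ choices of the other coordinates yields the statement.

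The main construction will be a joint law $\gamma$ of $(Y_{1:n},Y_{1:n}')$ with marginals $Q_0^{\otimes n}$ and $Q_1^{\otimes n}$, obtained by gluing $n$ independent copies of a maximal coupling of $Q_0,Q_1$; by the coupling characterization of total variation, this gives $\E_\gamma \ham{Y_{1:n}}{Y_{1:n}'} = n\,\tv{Q_0}{Q_1}$. Because $\hat\theta^i$ is a post-processing of the $\epsilon$-DP estimator $\hat T$, group privacy gives, for any $X_{1:n}$, any $y,y'$ at Hamming distance $k$, and any event $A$,
\[
\Prob(\hat\theta^i(X_{1:n}, y) \in A) \leq e^{k\epsilon} \Prob(\hat\theta^i(X_{1:n}, y') \in A).
\]
Specializing to $A=\{1\}$ and using $\hat\theta^i\in\{0,1\}$, writing $a=\Prob(\hat\theta^i(X_{1:n},y)=1)$ and $c=\Prob(\hat\theta^i(X_{1:n},y')=1)$ and splitting cases on whether $a\le e^{-k\epsilon}$, a short elementary calculation upgrades the two-sided ratio to the additive testing-error bound
\[
\Prob(\hat\theta^i(X_{1:n}, y)\ne 0) + \Prob(\hat\theta^i(X_{1:n}, y')\ne 1) = a + 1 - c \geq e^{-k\epsilon}.
\]

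To conclude, I would integrate this pointwise inequality against $\gamma$, using that $X_{1:n}$ is independent of $(Y_{1:n},Y_{1:n}')$, and apply Jensen's inequality to the convex map $h \mapsto e^{-\epsilon h}$:
\[
S_{(\theta_{-i},0)}(\hat{\theta}^i\ne 0) + S_{(\theta_{-i},1)}(\hat{\theta}^i\ne 1) \geq \E_\gamma \p{e^{-\epsilon \ham{Y_{1:n}}{Y_{1:n}'}}} \geq e^{-\epsilon\,\E_\gamma \ham{Y_{1:n}}{Y_{1:n}'}} = e^{-n\epsilon\,\tv{Q_0}{Q_1}}.
\]
Averaging over the remaining $N-1$ coordinates closes the proof. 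The one step requiring genuine care is the conversion of the two-sided group-privacy ratio into the additive inequality $a+1-c\geq e^{-k\epsilon}$; everything else --- the coordinatewise maximal coupling, the post-processing reduction from $\hat T$ to $\hat\theta^i$, and the final Jensen application --- is a routine assembly of standard ingredients.
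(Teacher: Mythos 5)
Your proposal is correct and follows essentially the same route as the paper: a coordinatewise maximal coupling of $Q_0^{\otimes n}$ and $Q_1^{\otimes n}$ with shared $X$'s, post-processing plus group privacy to get the pointwise bound $\Prob(\hat{\theta}^i(X,y)\neq 0)+\Prob(\hat{\theta}^i(X,y')\neq 1)\geq e^{-\epsilon\,\ham{y}{y'}}$, and then Jensen's inequality together with $\E\,\ham{Y_{1:n}}{Y_{1:n}'}=n\,\tv{Q_0}{Q_1}$. The only cosmetic difference is that you fix the remaining coordinates and average at the end, whereas the paper folds their uniform choice into the coupling; the substance is identical.
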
 

\begin{proof}
This proof builds on coupling arguments from \cite{acharya2018differentially,acharya2021differentially,lalanne2022statistical}.
    Let us consider the coupling $\mathcal{C}$ that selects $\theta^1, \dots, \theta^{i-1}, \theta^{i+1} \dots, \theta^{N} \in \{0, 1 \}$ uniformly at random, and then 
    samples accordingly
    \begin{equation}
        X_1, \dots, X_n \sim P
    \end{equation}
    and
    \begin{equation}
        (Y_1, Y_1'), \dots, (Y_n, Y_n') \sim Q
    \end{equation}
    where $Q$ is the optimal coupling between $Q_{(\theta^1, \dots, \theta^{i-1}, 0, \theta^{i+1} \dots, \theta^{N})}$ and $Q_{(\theta^1, \dots, \theta^{i-1}, 1, \theta^{i+1} \dots, \theta^{N})}$. Here, the optimality means that if $(Y, Y') \sim Q$, then 
    \begin{equation}
        \Prob(Y = Y') = 1 -\tv{Q_{(\theta^1, \dots, \theta^{i-1}, 0, \theta^{i+1} \dots, \theta^{N})}}{Q_{(\theta^1, \dots, \theta^{i-1}, 1, \theta^{i+1} \dots, \theta^{N})}} \;.
    \end{equation}
    The existence of such a coupling is well known (see, \textit{e.g.} \cite{lindvall2002lectures}).
    Furthermore, we let $X_1', \dots, X_n'$ be copies of $X_1, \dots, X_n$.

    Then $\p{(X_1, \dots, X_n, Y_1, \dots, Y_n), (X_1', \dots, X_n', Y_1', \dots, Y_n')}$ (whose distribution is $\mathcal C$) is a coupling between $\Prob_{{{\theta_{+i}}}}$ and $\Prob_{{{\theta_{-i}}}}$.

    Thus, we may write
    \begin{equation}
        \begin{aligned}
            &\Prob_{{{\theta_{-i}}}} (\hat{\theta}^i \neq 0) + \Prob_{{{\theta_{+i}}}} (\hat{\theta}^i \neq 1) = \\
            &\quad\quad
            \E_{\mathcal{C}} \p{\Prob (\hat{\theta}^i \neq 0 | X_1, \dots, X_n, Y_1, \dots, Y_n) + \Prob (\hat{\theta}^i \neq 1 | X_1', \dots, X_n', Y_1', \dots, Y_n')}
        \end{aligned}
    \end{equation}
    where the inner randomness (i.e. inside the expectation) is only w.r.t. $\hat{\theta}^i$. Since $\hat{T}$ is $\epsilon$-DP, then so is $\hat{\theta}^i$ by post-processing. Thus, by group privacy and the characteristic property of differential privacy,
    \begin{equation}
        \begin{aligned}
            &\Prob_{{{\theta_{-i}}}} (\hat{\theta}^i \neq 0) + \Prob_{{{\theta_{+i}}}} (\hat{\theta}^i \neq 1) = \\
            &\quad\quad \E_{\mathcal{C}} \p{\Prob (\hat{\theta}^i((X_1, \dots, X_n, Y_1, \dots, Y_n)) \neq 0) + \Prob (\hat{\theta}^i((X_1', \dots, X_n', Y_1', \dots, Y_n')) \neq 1)} \\
            &\quad\quad \geq \E_{\mathcal{C}} \Bigg(e^{- \epsilon \ham{(X_1, \dots, X_n, Y_1, \dots, Y_n)}{(X_1', \dots, X_n', Y_1', \dots, Y_n')}}\Prob (\hat{\theta}^i((X_1', \dots, X_n', Y_1', \dots, Y_n')) \neq 0) \\
            &\qquad\qquad\qquad\qquad\qquad\qquad\qquad\qquad\qquad\qquad\qquad+ \Prob (\hat{\theta}^i((X_1', \dots, X_n', Y_1', \dots, Y_n')) \neq 1)\Bigg) \\
            &\quad\quad \geq \E_{\mathcal{C}} \Bigg(e^{- \epsilon \ham{(X_1, \dots, X_n, Y_1, \dots, Y_n)}{(X_1', \dots, X_n', Y_1', \dots, Y_n')}} \\
            &\qquad\qquad\qquad\qquad\qquad 
            \underbrace{\p{\Prob (\hat{\theta}^i((X_1', \dots, X_n', Y_1', \dots, Y_n')) \neq 0)+ \Prob (\hat{\theta}^i((X_1', \dots, X_n', Y_1', \dots, Y_n')) \neq 1)}}_{=1}\Bigg) \\
            &\quad\quad = \E_{\mathcal{C}} \p{e^{- \epsilon \ham{(X_1, \dots, X_n, Y_1, \dots, Y_n)}{(X_1', \dots, X_n', Y_1', \dots, Y_n')}}} \\
            &\quad\quad = \E_{\theta^1, \dots, \theta^{i-1}, \theta^{i+1} \dots, \theta^{N}} \Bigg(  \\
            &\quad\quad\quad\quad\quad\quad\E_{{(X_1, \dots, X_n, Y_1, \dots, Y_n)}{(X_1', \dots, X_n', Y_1', \dots, Y_n')}} \p{e^{- \epsilon \ham{(X_1, \dots, X_n, Y_1, \dots, Y_n)}{(X_1', \dots, X_n', Y_1', \dots, Y_n')}}} \Bigg)\\
            &\quad\quad \stackrel{\text{Jensen}}{\geq}
            \E_{\theta^1, \dots, \theta^{i-1}, \theta^{i+1} \dots, \theta^{N}} \Bigg( \\
            &\quad\quad\quad\quad\quad\quad e^{- \epsilon  \E_{{(X_1, \dots, X_n, Y_1, \dots, Y_n)}{(X_1', \dots, X_n', Y_1', \dots, Y_n')}} \p{\ham{(X_1, \dots, X_n, Y_1, \dots, Y_n)}{(X_1', \dots, X_n', Y_1', \dots, Y_n')}}}\bigg) \\
            &\quad\quad =\frac{1}{2^{N-1}}\sum_{\theta^1, \dots, \theta^{i-1}, \theta^{i+1} \dots, \theta^{N} \in \{0, 1 \}} e^{- n \epsilon \tv{Q_{(\theta^1, \dots, \theta^{i-1}, 0, \theta^{i+1} \dots, \theta^{N})}}{Q_{(\theta^1, \dots, \theta^{i-1}, 1, \theta^{i+1} \dots, \theta^{N})}}} \;,
        \end{aligned}
    \end{equation}
    where the last line comes from a simple computation.
\end{proof}

    \paragraph{Conclusion of the proof.}

    As a consequence of \Cref{lemma:packingdistance}, \Cref{lemma:tvpacking}, \Cref{lemma:Assouad} and \Cref{lemma:lecamassouad},
    \begin{equation}
        \inf_{\hat{T}} \sup_{\theta \in \{0, 1 \}^{N}} \E_{S_{\theta}} \p{\int \| \nabla \phi_{\theta} - \hat{T}  \|^2 dP}
        \gtrsim N h^{2 \alpha + d} e^{- C n \epsilon h^{\alpha-1+d}} \;,
    \end{equation}
    where $C > 0$ is a positive constant.
    Finally, since $N \asymp h^{-d}$, taking $h \asymp (n \epsilon)^{- \frac{1}{\alpha - 1 + d}}$ yields
    \begin{equation}
        \inf_{\hat{T}} \sup_{\theta \in \{0, 1 \}^{N}} \E_{S_{\theta}} \p{\int \| \nabla \phi_{\theta} - \hat{T}  \|^2 dP}
        \gtrsim (n \epsilon)^{- \frac{2 \alpha}{\alpha - 1 + d}} \;,
    \end{equation}
and \eqref{eq:familytopacking} then concludes the proof.

\end{document}